\providecommand{\U}[1]{\protect\rule{.1in}{.1in}}
\newtheorem{theorem}{Theorem}[section]
\newtheorem*{acknowledgement*}{Acknowledgement}
\newtheorem{lemma}[theorem]{Lemma}
\newtheorem{proposition}[theorem]{Proposition}
\newtheorem{remark}[theorem]{Remark}
\begin{document}
\title[Low index self--shrinkers]{Rigidity and gap results for low index
properly immersed self--shrinkers in $\mathbb{R}^{m+1}$}

\begin{abstract}
In this paper we show that the only properly immersed self--shrinkers 
$\Sigma$ in $\mathbb{R}^{m+1}$ with Morse index $1$ are the hyperplanes through the origin. 
Moreover, we prove that if $\Sigma$ is not a hyperplane through the origin then the index jumps and 
it is at least $m+2$, with equality if and only if $\Sigma$
is a cylinder $\mathbb{R}^{m-k}\times \mathbb{S}^{k}(\sqrt{k})$ for some $1\leq k\leq m-1$.
\end{abstract}

\date{\today}

\author {Debora Impera}
\address{Dipartimento di Matematica e Applicazioni\\
Universit\`a degli Studi di Milano Bicocca\\
via Cozzi 53\\
I-20125 Milano, ITALY}
\email{debora.impera@gmail.com}
\maketitle
\tableofcontents

\section{Introduction}
Let $\Sigma^m$ be a complete connected orientable $m$--dimensional Riemannian manifold without boundary 
isometrically immersed by $x_0:\Sigma^m\to\mathbb{R}^{m+1}$ in the Euclidean space $\mathbb{R}^{m+1}$. We say 
that $\Sigma$ is moved along its mean curvature vector if there is a whole family 
$x_t=x(\cdot\,,\, t)$ of smooth immersions, with corresponding hypersurfaces $\Sigma_t=x_t(\Sigma)$, 
such that it satisfies the mean curvature flow initial value problem
\begin{equation}\label{MCF}
\begin{cases}
\frac{\partial}{\partial t}x(p,t)=H(p,t)\nu(p,t)&p\in \Sigma^m\\
x(\cdot,\,t_0)=x_0.&
\end{cases} 
\end{equation}
Here $H(p,t)$ and $\nu(p,t)$ are respectively the mean curvature and the unit normal 
vector of the hypersurface $\Sigma_t$ at $x(p,t)$. When possible, we will choose the unit normal 
$\nu$ to be inward pointing.

The short time existence and uniqueness of a solution of \eqref{MCF} was investigated in 
classical works on quasilinear parabolic equations. Another interesting and more challenging
question is what happens to these flows in the long term. Classical examples show that 
singularities can happen. A major problem in literature has been to study the nature of 
these singularities and it is a general principle, discovered by Huisken, that the singularities are 
modeled by self--shrinkers.
A connected, isometrically immersed hypersurface
$x: \Sigma^{m}\rightarrow\mathbb{R}^{m+1}$ is said to be a \textit{self}--\textit{shrinker} (based at $0
\in\mathbb{R}^{m+1}$) if the family of surfaces 
$\Sigma_t=\sqrt{-2t}\Sigma$ flows by mean curvature. Equivalently, a self--shrinker
can also be characterized as an isometrically immersed hypersurface whose mean curvature vector field
$\mathbf{H}$ satisfies the equation%
\[
x^{\bot}=-\mathbf{H},
\]
where $\left(  \cdot\right)  ^{\bot}$ denotes the projection on the normal bundle of $\Sigma$.
Note that we are using the convention
$H=\mathrm{Tr}_{\Sigma}A$, where $A$ denotes the second fundamental form of the immersion defined as
\[
A X=-\overline{\nabla}_X\nu,
\]
with $\overline{\nabla}$ Levi--Civita connection of $\mathbb{R}^{m+1}$.
With this convention, the self--shrinker equation takes the scalar form
\[
\left\langle x, \nu \right \rangle= -H.
\]
Standard examples of self--shrinkers are the hyperplanes through the origin of $\mathbb{R}^{m+1}$, 
the sphere $\mathbb{S}^{m}(\sqrt{m})$ and the cylinders 
$\Sigma=\mathbb{R}^{m-k}\times \mathbb{S}^{k}(\sqrt{k})$ for some $1\leq k\leq m-1$.
Other examples of self--shrinkers are due to Angenent, \cite{A}, who constructed 
a family of embedded self--shrinkers that are topologically $\mathbb{S}^1\times\mathbb{S}^{m-1}$.

It is well--known that self--shrinkers in $\mathbb{R}^{m+1}$ can be viewed as 
$f$--minimal hypersurfaces, 
that is, critical points of the weighted area functional
\[
\ \mathrm{vol}_f(\Sigma)=\int_{\Sigma}e^{-f}d\mathrm{vol}_{\Sigma},
\]
where $f=|x|^2/2$ (we refer the reader to the papers 
\cite{ChMeZh1}, \cite{Esp}, \cite{IR} for more details on $f$--minimal hypersurfaces).
Moreover, we say that a self--shrinker is $f$--stable if it is a local minimum of the weighted area 
functional for every compactly supported normal variation. In the instability case, it makes sense to investigate the Morse index, 
that is, roughly speaking, the maximum dimension of the linear space 
of compactly supported deformations that decrease the weighted area up to second order. 

It was proved by Colding and Minicozzi, \cite{CoMi}, that every complete properly immersed 
self--shrinker is necessarily $f$--unstable. Equivalently, every properly immersed self--shrinker 
has Morse index greater than or equal to $1$. In the equality case, rigidity 
results have been proved by Hussey, \cite{Hu}, under the additional assumption of embeddedness. 
More precisely, he showed that if a complete properly embedded self--shrinker in 
$\mathbb{R}^{m+1}$ has Morse index $1$, then it has to be a hyperplane through the origin. 
Furthermore, he also proved that if the self-shrinker is not a hyperplane through the origin, 
then the Morse index jumps and it has to be at least $m+2$, with equality if and only if 
the self--shrinker is a cylinder $\mathbb{R}^{m-k}\times \mathbb{S}^{k}(\sqrt{k})$ 
for some $1\leq k\leq m$. It is worth pointing out that 
there many examples of non-embedded self--shrinkers. Indeed, Abresch and Langer, \cite{AL}, 
constructed self--intersecting curves in the plane that are self--shrinkers under the mean curvature 
flow. Moreover, it is possible to obtain examples of non-embedded self-shrinkers by taking products 
of these curves with Euclidean factors. Finally Drugan and Kleene, \cite{DK}, recently constructed 
in generic dimension infinitely many immersed rotationally symmetric self-shrinkers having 
the topological type of the sphere $(S^n)$, the plane $(\mathbb{R}^{n})$, the cylinder
$(\mathbb{R}\times\mathbb{S}^{n-1})$, and the torus $(\mathbb{S}^1\times\mathbb{S}^{n-1})$.

The aim of this paper is to investigate if hyperplanes through the origin and cylinders remains the only 
hypersurfaces, among the wider family of properly immersed self--shrinkers 
in $\mathbb{R}^{m+1}$, having Morse index $1$
and $m+2$ respectively, and if, except for them, every properly immersed self--shrinker has Morse index strictly bigger
than $m+2$. 
Exploiting the link between stability properties of self--shrinkers and spectral properties of 
a suitable weighted Schr\"{o}dinger operator, as well as some basic identities 
which naturally involve the weighted Laplacian of the self--shrinker,
we provide a positive answer to the above mentioned problem. More precisely, 
we prove the following
\begin{theorem}\label{thm_Index}
Let $\Sigma^m$ be a complete properly immersed self--shrinker in $\mathbb{R}^{m+1}$. Then 
\begin{enumerate}
 \item $\mathrm{Ind}_f(\Sigma)\geq1$ and equality holds if and only if $\Sigma$ is a hyperplane through the origin;
 \item If $\Sigma$ is non--totally geodesic, then $\mathrm{Ind}_f(\Sigma)\geq m+2$. Moreover, 
$\mathrm{Ind}_f(\Sigma)=m+2$ if and only if $\Sigma=\mathbb{R}^{m-k}\times \mathbb{S}^{k}(\sqrt{k})$ 
for some $1\leq k\leq m-1$.
\end{enumerate}
\end{theorem}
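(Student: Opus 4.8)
\emph{Strategy.} The plan is to realize $\mathrm{Ind}_f(\Sigma)$ as the number of positive eigenvalues of the stability operator $L=\Delta_f+|A|^2+1$ of the weighted area on $L^2(e^{-f}\,d\mathrm{vol}_\Sigma)$ (here the $1$ is the ambient Bakry--\'Emery Ricci $\overline{\mathrm{Ric}}_f(\nu,\nu)$ for $f=|x|^2/2$ on $\mathbb R^{m+1}$) --- which is legitimate because a properly immersed self--shrinker has polynomial volume growth, so $e^{-f}$ is integrable and the weighted Sobolev/variational framework makes the index finite and computable from the Rayleigh quotient of $-L$ --- and then to exhibit the right eigenfunctions. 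I would rely on three inputs: (i) the identities $LH=2H$ and $L\langle v,\nu\rangle=\langle v,\nu\rangle$ for every fixed $v\in\mathbb R^{m+1}$, which follow from $\langle x,\nu\rangle=-H$ via the Codazzi and Simons equations as in \cite{CoMi}; (ii) the dichotomy that a properly immersed self--shrinker has $H\equiv0$ iff it is a hyperplane through the origin (if $H\equiv0$ then $x^{\bot}=-\mathbf H=0$, the position vector is tangent, $\Sigma$ is a minimal cone, hence a hyperplane through $0$ by regularity at the vertex); and (iii) the fact that $v\mapsto\langle v,\nu\rangle$, as a linear map $\mathbb R^{m+1}\to L^2(e^{-f})$, has kernel exactly the directions along which $\Sigma$ metrically splits off a Euclidean factor, so if $\Sigma=\mathbb R^{\ell}\times\Sigma''$ with $\Sigma''$ not splitting further, its image $V$ has dimension $m+1-\ell$.

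For part (1): since $\nu$ never vanishes, some $\langle v,\nu\rangle\not\equiv0$ is an eigenfunction of $L$ with positive eigenvalue $1$, so $\mathrm{Ind}_f(\Sigma)\ge1$. If moreover $\Sigma$ is not a hyperplane through the origin, then $H\not\equiv0$ by (ii); as $H$ and $\langle v,\nu\rangle$ are eigenfunctions of $L$ with distinct positive eigenvalues they are $L^2(e^{-f})$--orthogonal and $Q$ is negative definite on their span, giving $\mathrm{Ind}_f(\Sigma)\ge2$. Hence $\mathrm{Ind}_f(\Sigma)=1$ forces a hyperplane through the origin; conversely such a hyperplane is flat with $L=\Delta_f+1$, whose only positive eigenvalue is $1$ (on the constants), so its index is $1$.

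For the lower bound in part (2): since $\Sigma$ is non--totally--geodesic, $A\not\equiv0$ and hence $H\not\equiv0$ by (ii). If $\ell=0$ then $\dim V=m+1$, and $\mathbb R H\oplus V$ is an $(m+2)$--dimensional subspace on which $Q$ is negative definite (distinct positive eigenvalues again), so $\mathrm{Ind}_f(\Sigma)\ge m+2$. If $\ell\ge1$, I would write $\Sigma=\mathbb R^{\ell}\times\Sigma''$ with $\Sigma''\subset\mathbb R^{m+1-\ell}$ a properly immersed, non--totally--geodesic self--shrinker of dimension $m-\ell$ that does not split; then $L=\mathcal O_{\mathbb R^{\ell}}\otimes 1+1\otimes L_{\Sigma''}$, where $\mathcal O$ is the Ornstein--Uhlenbeck operator with spectrum $\{0,-1,-2,\dots\}$ and Hermite eigenspaces of dimensions $1,\ell,\dots$, so pairing the eigenvalue $2$ of $L_{\Sigma''}$ (multiplicity $\ge1$, from $H''$) with the Hermite levels $j=0,1$ and the eigenvalue $1$ of $L_{\Sigma''}$ (multiplicity $m+1-\ell$, from the $\langle v,\nu''\rangle$) with the Hermite level $j=0$ yields positive eigenvalues of $L$ of total multiplicity at least $(1+\ell)+(m+1-\ell)=m+2$; again $\mathrm{Ind}_f(\Sigma)\ge m+2$.

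The crux --- and the step I expect to be the main obstacle --- is the equality case. Assume $\mathrm{Ind}_f(\Sigma)=m+2$. When $\ell=0$, the $m+2$ eigenfunctions above exhaust the positive spectrum of $L$, so $\ker(L-2)=\mathbb R H$ and $L$ has no eigenvalue exceeding $2$. The Simons identity for self--shrinkers gives $\frac12\Delta_f|A|^2=|\nabla A|^2+|A|^2-|A|^4$, hence $L|A|\ge2|A|$ weakly by Kato's inequality; testing the Rayleigh quotient of $-L$ with $|A|$ gives $\lambda_1(-L)\le-2$, so (no eigenvalue $>2$) the positive ground state $\phi_1$ satisfies $L\phi_1=2\phi_1$, forcing $\phi_1\in\ker(L-2)=\mathbb R H$ --- thus $H$ has a sign, say $H>0$ after choosing $\nu$ --- and $\lambda_1(-L)=-2$. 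Since $Q(|A|)=-\int|A|\,L|A|\,e^{-f}\le-2\|A\|^2=\lambda_1(-L)\|A\|^2\le Q(|A|)$, equality holds throughout and $|A|$ realizes $\lambda_1(-L)$, so $|A|\in\ker(L-2)=\mathbb R H$: thus $|A|=bH$ for a constant $b>0$ and $L|A|=2|A|$, which forces $|\nabla A|^2=|\nabla|A||^2$ on all of $\Sigma$. If $\nabla A\equiv0$, the classical classification of complete hypersurfaces of $\mathbb R^{m+1}$ with parallel second fundamental form, together with $A\not\equiv0$ and the self--shrinker equation, yields $\Sigma=\mathbb R^{m-k}\times\mathbb S^{k}(\sqrt k)$, and $\ell=0$ forces $k=m$, i.e.\ $\Sigma=\mathbb S^{m}(\sqrt m)$. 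If $\nabla A\not\equiv0$, equality in Kato gives $\nabla_iA_{jk}=\eta_iA_{jk}$ with $\eta=\nabla\log H\not\equiv0$, whence total symmetry of $\nabla A$ (Codazzi) forces $\mathrm{rank}\,A\le1$ on the open set $\{\eta\neq0\}$; there $\Sigma$ is ruled by pieces of affine $(m-1)$--planes (leaves of relative nullity), which by completeness and properness extend to entire affine planes, so $\Sigma=\mathbb R^{m-1}\times\gamma$, contradicting $\ell=0$ when $m\ge2$ (the case $m=1$ being handled directly). Hence $\nabla A\equiv0$ and $\Sigma=\mathbb S^{m}(\sqrt m)$. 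When $\ell\ge1$, equality in the product spectrum computation forces the positive spectrum of $L_{\Sigma''}$ to consist of exactly the value $2$ with multiplicity $1$ and the value $1$ with multiplicity $m+1-\ell$, so $\mathrm{Ind}_f(\Sigma'')=m+2-\ell=\dim\Sigma''+2$; as $\Sigma''$ is non--totally--geodesic and does not split, the $\ell=0$ case applied to $\Sigma''$ gives $\Sigma''=\mathbb S^{m-\ell}(\sqrt{m-\ell})$, i.e.\ $\Sigma=\mathbb R^{\ell}\times\mathbb S^{m-\ell}(\sqrt{m-\ell})$ is the cylinder with $1\le k=m-\ell\le m-1$. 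The converse is the explicit spectral computation for $\mathbb R^{m-k}\times\mathbb S^{k}(\sqrt k)$, whose operator is $\mathcal O_{\mathbb R^{m-k}}\otimes 1+1\otimes(\Delta_{\mathbb S^{k}(\sqrt k)}+2)$ and whose positive spectrum has total multiplicity $m+2$. Besides the rigidity step above, the other point requiring care is the functional analysis: justifying $|A|$ and $|\nabla A|$ as admissible test functions and identifying $|A|$ with an eigenfunction, which rests on the polynomial volume growth of $\Sigma$ and the integrability of $|A|$.
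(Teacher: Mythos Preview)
Your overall strategy is sound and closely parallels the paper's, but there is one genuine gap in the equality case, and a couple of organizational differences worth noting.

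\textbf{The gap: the curve case.} In your equality analysis you write ``the case $m=1$ being handled directly'' and, for $\ell\ge1$, reduce to a non-splitting factor $\Sigma''$ and invoke the $\ell=0$ case. But when $\dim\Sigma''=1$ (equivalently, when the rank-one branch produces $\Sigma=\mathbb R^{m-1}\times\gamma$), your $\ell=0$ argument does not apply: for a curve one always has $\mathrm{rank}\,A=1$ and $|\nabla A|=|\nabla|A||$ trivially, so neither the Lawson classification ($\nabla A\equiv0$) nor the ``contradicts $\ell=0$'' branch gives anything. You still need to show that a properly immersed self-shrinking curve with $H>0$ and $f$-index exactly $3$ is the unit circle. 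This is not automatic: the Abresch--Langer curves $\Gamma_{p,n}$ are closed self-shrinkers with $H>0$ and must be ruled out. The paper handles this by combining the Abresch--Langer classification with the Epstein--Weinstein lower bound (at least four negative eigenvalues for any non-circular $\Gamma_{p,n}$), which forces $\mathrm{Ind}_f(\gamma)\ge4$ and hence $\mathrm{Ind}_f(\Sigma)\ge m+3$, a contradiction.

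\textbf{Organizational differences in the lower bound.} When $\Sigma$ has directions $b$ with $\langle b,\nu\rangle\equiv0$, you pass to a global product $\Sigma=\mathbb R^\ell\times\Sigma''$ and use the tensor spectrum with the Ornstein--Uhlenbeck operator. The paper instead stays on $\Sigma$: for each such $b_j$ it shows directly that $l_{b_j}H=\langle x,b_j\rangle H$ is an $L_f$-eigenfunction with eigenvalue $-1$ (these are exactly your Hermite-level-$1$\,$\otimes\,H''$ functions), and then takes the $(m+2)$-dimensional space spanned by the \emph{constants} together with the $(m+1)$-dimensional eigenspace $U$, rather than $\mathbb R H\oplus V$ as you do. The computation $Q(a+\phi,a+\phi)<0$ uses $f$-parabolicity (finite weighted volume) to get $\int_\Sigma\phi\|A\|^2e^{-f}=0$. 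Both constructions are valid; the paper's avoids having to justify the global isometric splitting $\Sigma=\mathbb R^\ell\times\Sigma''$, which, while true for a complete hypersurface with a constant tangent vector field, is an extra step you do not carry out.

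\textbf{A minor point in the rigidity step.} Your dichotomy ``$\nabla A\equiv0$ versus $\nabla A\not\equiv0$'' is not the right split: from Kato equality you get $\nabla_XA=\eta(X)A$ pointwise, and Codazzi forces $\eta=0$ wherever $\mathrm{rank}\,A\ge2$; but on $\{\eta=0\}$ the rank could still be $\ge2$. The clean split (as in Colding--Minicozzi and the paper) is ``$\mathrm{rank}\,A\ge2$ somewhere'' versus ``$\mathrm{rank}\,A=1$ everywhere''; in the first case one shows $\{\mathrm{rank}\,A\ge2\}$ is open and closed, hence all of $\Sigma$, whence $\nabla A\equiv0$ and Lawson applies. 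In the second case one runs the nullity-foliation argument globally, not just on $\{\eta\ne0\}$.
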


\section{Some spectral theory and potential theory on weighted manifolds}\label{sec_spectral}
\subsection{Some spectral theory for weighted Schr\"odinger operators}
A weighted manifold is a triple 
$\Sigma^{m}_f=(\Sigma^{m}, \left\langle \,,\,\right\rangle, e^{-f}d\mathrm{vol}_\Sigma)$, 
where $(\Sigma^{m}, \left\langle \,,\,\right\rangle)$ is a complete $m$--dimensional Riemannian manifold, 
$f\in C^{\infty}(\Sigma)$ and $d\mathrm{vol}_\Sigma$ denotes the canonical Riemannian volume form on 
$\Sigma$. In the following we collect some well-know facts about spectral theory on weighted 
manifolds (see e.g. \cite[Chapter 3]{PRS_Progress} for an exhaustive survey on spectral 
theory on Riemannian manifolds).

Associated to a weighted manifold $\Sigma_f$ 
there is a natural divergence form second order diffusion operator,the $f$--Laplacian,
defined on $u$ by
\[
\Delta_{f}u=e^{f}\operatorname{div}\left(  e^{-f}\nabla u\right)=
\Delta u -\left\langle \nabla u, \nabla f\right\rangle.
\]
This is clearly symmetric on $L^{2}(\Sigma_f)$ endowed with the inner product
\[
(u,v)_{L^2(\Sigma_f)}=\int_{\Sigma} uv e^{-f}d\mathrm{vol}_{\Sigma}.
\]
Given $q\in L^{\infty}_{loc}(\Sigma)$,
consider the \textit{weighted Schr\"odinger operator}
\[
Lu=-\Delta_fu-qu,\quad \forall u\in C^{\infty}_c(\Sigma).
\]
This is again a symmetric linear operator on $L^2(\Sigma_f)$ and we set 
$Q$ to be the symmetric bilinear form on $L^2(\Sigma_f)$ defined as 
$Q(u,v):=(L u,v)_{L^2(\Sigma_f)}$. 
Recall that $L$ is said to be \textit{bounded from below} if 
$$Q(u,u)\geq c\|u\|^2_{L^2(\Sigma_f)},\quad c\in \mathbb{R}.$$ 
In particular, when $c\geq0$,  $L$ is said to be \textit{non--negative}. 

Given any open, relatively compact domain $\Omega\subset\Sigma$ we define $L_{|\Omega}$ to be 
the operator $L$ acting on $C^{\infty}_c(\Omega)$ and we denote by $L_\Omega$ its Friedrichs 
extension. By standard spectral theory, $L_\Omega$ has purely discrete spectrum consisting
of a divergent sequence of eigenvalues $\{\lambda_k(L_\Omega)\}$.
The first eigenvalue of $L_\Omega$ is defined by Rayleigh characterization as
\[
\lambda_1(L_\Omega)=\inf_{0\neq u\in C^{\infty}_c(\Omega)}
\frac{Q(u,u)}{\|u\|^2_{L^2(\Sigma_f)}}.
\]
Moreover, we define the index of $L_\Omega$, $\mathrm{Ind}(L_\Omega)$, to be the
number, counted according to multiplicity,
of negative eigenvalues of $L_\Omega$. \\
The \textit{bottom of the spectrum of} $L$ on $\Sigma$ is then defined as 
\[
\lambda_1^L(\Sigma)=\inf\{\lambda_1(L_\Omega)\ :\ \Omega\subset\subset \Sigma\}.
\]
Similarly, the \textit{Morse index of} $L$ on $\Sigma$ is defined as
\[
\mathrm{Ind}^L(\Sigma):=\sup\{\mathrm{Ind}(L_\Omega)\ :\ \Omega\subset\subset\Sigma \}.
\]
Adapting to the weighted setting arguments in \cite{FC} it is not difficult to prove the following
\begin{proposition}\label{prop_basisL}
Let $\Sigma_f$ be a weighted manifold and let $L=-\Delta_f-q$, $q\in L^{\infty}_{loc}(\Sigma)$.
The following are equivalent:
\begin{enumerate}
 \item $\mathrm{Ind}^L(\Sigma)<+\infty$;
 \item There exists a finite dimensional subspace $W$ of the weighted space $L^2(\Sigma_f)$ having an
 orthonormal basis $\psi_1,\cdots,\psi_k$ consisting of eigenfunctions of $L$ with eigenvalues 
 $\lambda_1,\cdots,\lambda_k$ respectively. Moreover, each $\lambda_i$ is negative and any function
 $\phi\in C^{\infty}_c(\Sigma)\cap W^{\bot}$ satisfy $Q(\phi,\phi)\geq 0$.
 \end{enumerate}
\end{proposition}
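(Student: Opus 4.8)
The plan is to mimic the classical argument of Fischer-Colbrie in the weighted setting, using the variational characterization of the index in terms of Rayleigh quotients on an exhausting sequence of domains. First I would fix an exhaustion $\Omega_1\subset\subset\Omega_2\subset\subset\cdots$ with $\bigcup_j\Omega_j=\Sigma$, and observe that $\mathrm{Ind}(L_{\Omega_j})$ is nondecreasing in $j$ (by monotonicity of eigenvalues under domain inclusion, since $C^\infty_c(\Omega_j)\subset C^\infty_c(\Omega_{j+1})$) and that $\mathrm{Ind}^L(\Sigma)=\lim_j\mathrm{Ind}(L_{\Omega_j})=\sup_j\mathrm{Ind}(L_{\Omega_j})$. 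The implication $(2)\Rightarrow(1)$ is the easy direction: if such a $W$ exists with $\dim W=k$, then for any $\Omega\subset\subset\Sigma$ the form $Q$ restricted to $C^\infty_c(\Omega)$ can have at most $k$ linearly independent directions on which it is negative-definite — precisely because any test function orthogonal to $W$ has $Q(\phi,\phi)\ge 0$ — so $\mathrm{Ind}(L_\Omega)\le k$ for every $\Omega$, hence $\mathrm{Ind}^L(\Sigma)\le k<\infty$. A small technical point here is that the eigenfunctions $\psi_i$ need not be compactly supported, so one argues with the min-max characterization of $\mathrm{Ind}(L_\Omega)$ and the fact that an at-most-$k$-dimensional negative subspace of the quadratic form bounds the number of negative eigenvalues; alternatively one approximates in $L^2(\Sigma_f)$ and uses continuity of $Q$ on the form domain.

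For the harder direction $(1)\Rightarrow(2)$, suppose $\mathrm{Ind}^L(\Sigma)=k<\infty$. Then for $j$ large, $\mathrm{Ind}(L_{\Omega_j})=k$ is stabilized, so each $L_{\Omega_j}$ has exactly $k$ negative eigenvalues $\lambda_1(L_{\Omega_j})\le\cdots\le\lambda_k(L_{\Omega_j})<0\le\lambda_{k+1}(L_{\Omega_j})$, with Dirichlet eigenfunctions $\psi_1^{(j)},\dots,\psi_k^{(j)}$ forming an $L^2(\Sigma_f)$-orthonormal set supported in $\Omega_j$. The idea is to pass to a limit as $j\to\infty$. The negative eigenvalues $\lambda_i(L_{\Omega_j})$ are nonincreasing in $j$ and bounded below (by $\lambda_1^L(\Sigma)>-\infty$, which follows from boundedness of the index together with a Persson-type argument, or can be extracted directly), hence converge to limits $\lambda_i<0$. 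For the eigenfunctions one uses interior elliptic estimates: on any fixed $\Omega\subset\subset\Sigma$ the $\psi_i^{(j)}$ are uniformly bounded in $L^2$ and satisfy $-\Delta_f\psi_i^{(j)}=(\lambda_i(L_{\Omega_j})+q)\psi_i^{(j)}$, so by elliptic regularity (with $q\in L^\infty_{loc}$) they are bounded in $W^{2,2}_{loc}$, and a diagonal/Rellich argument produces a subsequence converging in $L^2_{loc}$ and weakly in $W^{1,2}_{loc}$ to functions $\psi_i$ solving $-\Delta_f\psi_i-q\psi_i=\lambda_i\psi_i$ on all of $\Sigma$.

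The main obstacle — and the step deserving the most care — is showing that the limiting eigenfunctions $\psi_i$ are genuinely in $L^2(\Sigma_f)$ (not merely $L^2_{loc}$) and that they remain orthonormal, so that $\dim W=k$ and no ``mass escapes to infinity.'' Here one exploits that $\lambda_i<0$ is strictly negative: a negative-eigenvalue solution of a Schrödinger equation that is in $L^2_{loc}$ is automatically globally $L^2$, because outside a large compact set where $q$ is effectively controlled the operator $-\Delta_f-q-\lambda_i$ is positive, forcing integrability (this is where the weighted maximum principle / the logarithmic cutoff trick à la Fischer-Colbrie enters, integrating $\psi_i(-\Delta_f-q-\lambda_i)\psi_i=0$ against $\eta^2$ for suitable cutoffs $\eta$ and using $\lambda_i<0$ to absorb the zeroth-order term). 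Finally, one checks the orthogonality relations pass to the limit and that for $\phi\in C^\infty_c(\Sigma)\cap W^\perp$ one has $Q(\phi,\phi)\ge0$: indeed $\phi$ is supported in some $\Omega_j$, and if it had negative $Q$ while being orthogonal to all $\psi_i$, one could, after correcting by a small multiple of the difference $\psi_i^{(j)}-\psi_i$ (which is small in $L^2$ on $\mathrm{supp}\,\phi$), build a $(k+1)$-dimensional negative subspace in some $C^\infty_c(\Omega_{j'})$, contradicting $\mathrm{Ind}(L_{\Omega_{j'}})=k$. This completes the equivalence.
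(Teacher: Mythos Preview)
Your plan is exactly what the paper has in mind: the paper does not give a proof at all, but simply states that the result follows by ``adapting to the weighted setting arguments in [FC]'' (Fischer--Colbrie), and your outline is precisely that adaptation carried out in detail. So there is nothing to compare against beyond the citation, and your approach is the intended one.

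One small phrasing issue worth tightening in the $(1)\Rightarrow(2)$ direction: you write that the limiting eigenfunction is globally $L^2$ ``because outside a large compact set where $q$ is effectively controlled the operator $-\Delta_f-q-\lambda_i$ is positive.'' The point is not that $q$ itself is controlled at infinity (for general $q\in L^\infty_{loc}$ it need not be), but rather that \emph{finite index implies} $L=-\Delta_f-q$ is nonnegative on $\Sigma\setminus K$ for some compact $K$ (the standard ``stable outside a compact set'' consequence of finite index). From this, $L-\lambda_i$ is strictly positive outside $K$ since $\lambda_i<0$, and the cutoff/logarithmic argument then gives both the uniform lower bound on the $\lambda_i(L_{\Omega_j})$ and the $L^2(\Sigma_f)$ integrability of the limits $\psi_i$. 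You clearly have the right mechanism in mind, but the justification should invoke finite index rather than any assumed behavior of $q$.
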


Finally, in case $L$ is essentially self--adjoint, we can relate the Morse index 
to the so--called spectral index of $L$.
Towards this aim we first recall that, given a self--adjoint operator 
$T:\mathcal{D}(T)\rightarrow L^2(\Sigma_f)$, its spectral index is defined as
\[
\widetilde{\mathrm{Ind}}^{T}(\Sigma)=
\sup\left\{\mathrm{dim}V\ :\ V\subset \mathcal{D}(T), 
\ (T u,u)_{L^2(\Sigma_f)}<0\ \forall\ 0\neq u\in V  \right\}.
\]
If $L$ is essentially self--adjoint, then there is a unique 
self--adjoint extension $L_\Sigma$ of $L$ and we can define the \textit{spectral index of} $L$
as the spectral index of its self--adjoint extension, that is
\[
\widetilde{\mathrm{Ind}}^{L}(\Sigma):=\widetilde{\mathrm{Ind}}^{L_\Sigma}(\Sigma).
\]
Furthermore, since $\Sigma$ is complete, if $L$ is 
also bounded from below by a constant $c$, then it is essentially self--adjoint. In this case 
$L_\Sigma$ corresponds to the Friedrichs extension of $L$, that is, the 
self--adjoint extension of $L$ associated to the closure of the quadratic form
$Q$ with respect to the norm $\| \cdot \|_Q$ induced by the inner product 
$Q(\ ,\ )+(1-c)(\ ,\ )_{L^2(\Sigma_f)}$. Moreover, it turns out that
\[
\overline{C^{\infty}_c(\Sigma)}^{\|\cdot \|_Q}=
\{u\in W^{1,2}(\Sigma_f): |q|^{1/2}u\in L^2(\Sigma_f)\}=:\mathcal{V}_q,
\]
and hence the domain of the operator $L_\Sigma$ is the space 
\[
\mathcal{D}(L_\Sigma)=\{u\in \mathcal{V}_q\ : \ L_\Sigma u \in L^2(\Sigma_f)\},
\]
where $L_\Sigma u$ is understood in distributional sense.

The relationship between the two concepts of index presented above is clarified by the following 
\begin{theorem}\label{thm_eqfinitind}
Let $\Sigma_f$ be a weighted manifold and let $L=-\Delta_f-q$, $q\in L^{\infty}_{loc}(\Sigma)$.
\begin{itemize}
 \item[(i)] If $L$ is essentially self--adjoint on $C_c^{\infty}(\Sigma)$, 
 then $\widetilde{\mathrm{Ind}}^{L}(\Sigma)=\mathrm{Ind}^{L}(\Sigma)$;
 \item[(ii)] if $\mathrm{Ind}^L(\Sigma)<+\infty$, then $L$ is bounded from below, essentially
 self--adjoint on $C_c^{\infty}(\Sigma)$ and 
$\mathrm{Ind}^L(\Sigma)=\widetilde{\mathrm{Ind}}^L(\Sigma)<+\infty$.  
\end{itemize}
\end{theorem}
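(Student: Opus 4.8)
The plan is to funnel both notions of index through a single quantity,
\[
J(\Sigma):=\sup\bigl\{\dim V\ :\ V\subset C^\infty_c(\Sigma)\text{ finite dimensional},\ Q(u,u)<0\ \text{ for all }0\neq u\in V\bigr\}.
\]
First I would prove, with no extra hypotheses, that $\mathrm{Ind}^L(\Sigma)=J(\Sigma)$. For a fixed relatively compact $\Omega$ the operator $L_\Omega$ has discrete spectrum, so by the min--max principle $\mathrm{Ind}(L_\Omega)$ equals the largest dimension of a subspace of the form domain of $L_\Omega$ (the closure of $C^\infty_c(\Omega)$ with respect to $\|\cdot\|_Q$, a norm which for $\overline\Omega$ compact is equivalent to the $W^{1,2}$ one since $q\in L^\infty(\overline\Omega)$) on which $Q$ is negative definite. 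Given such a subspace, approximating a basis in $\|\cdot\|_Q$ by functions of $C^\infty_c(\Omega)$ and using that negative definiteness of a symmetric matrix --- hence also linear independence --- is stable under small perturbations, one replaces it by a subspace of $C^\infty_c(\Omega)\subset C^\infty_c(\Sigma)$ of the same dimension, so $\mathrm{Ind}(L_\Omega)\le J(\Sigma)$; conversely any $V$ realizing $J(\Sigma)$ sits in some $C^\infty_c(\Omega)$ and forces $\mathrm{Ind}(L_\Omega)\ge\dim V$. Taking the supremum over $\Omega$ gives the equality.

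Next, assuming $L$ essentially self--adjoint, I would show $\widetilde{\mathrm{Ind}}^L(\Sigma)=J(\Sigma)$, whence (i). Here $C^\infty_c(\Sigma)$ is a core for $L_\Sigma$ and $(L_\Sigma u,u)_{L^2(\Sigma_f)}=Q(u,u)$ on it, so any subspace realizing $J(\Sigma)$ shows $\widetilde{\mathrm{Ind}}^L(\Sigma)\ge J(\Sigma)$. For the reverse inequality, given $V\subset\mathcal{D}(L_\Sigma)$ with $(L_\Sigma u,u)_{L^2(\Sigma_f)}<0$ for $0\neq u\in V$, I would approximate a basis of $V$ in the graph norm by elements of the core $C^\infty_c(\Sigma)$ and use the continuity of $(u,v)\mapsto(L_\Sigma u,v)_{L^2(\Sigma_f)}$ with respect to the graph norm, together with the same perturbation remark, to produce $V'\subset C^\infty_c(\Sigma)$ with $\dim V'=\dim V$ on which $Q<0$; thus $J(\Sigma)\ge\dim V$, and (i) follows.

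For (ii), suppose $N:=\mathrm{Ind}^L(\Sigma)<+\infty$. The crucial structural step is that \emph{there is a compact $K\subset\Sigma$ with $Q(\phi,\phi)\ge0$ for every $\phi\in C^\infty_c(\Sigma\setminus K)$}: if not, then for every compact set there is a test function supported in its complement with negative $Q$, so along an exhaustion $K_1\subset K_2\subset\cdots$ one can select $\phi_1,\phi_2,\dots$ with pairwise disjoint compact supports and $Q(\phi_j,\phi_j)<0$; the cross terms of $Q$ then vanish, so $Q$ is negative definite on every finite span, and putting $N+1$ of the $\phi_j$ in a common relatively compact domain contradicts $\mathrm{Ind}^L(\Sigma)=N$. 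Choosing then a smooth partition $\chi^2+\eta^2\equiv1$ with $\chi\equiv1$ near $K$ and $\operatorname{supp}\chi$ compact, the IMS--type identity
\[
Q(u,u)=Q(\chi u,\chi u)+Q(\eta u,\eta u)-\int_\Sigma\bigl(|\nabla\chi|^2+|\nabla\eta|^2\bigr)u^2\,e^{-f}\,d\mathrm{vol}_\Sigma,
\]
valid because $\chi\nabla\chi+\eta\nabla\eta=0$, combined with $Q(\eta u,\eta u)\ge0$ (the step just proved, as $\operatorname{supp}(\eta u)\subset\Sigma\setminus K$), with $Q(\chi u,\chi u)\ge-C\|u\|^2_{L^2(\Sigma_f)}$ for a fixed relatively compact $\Omega\supset\operatorname{supp}\chi$ and $C=\max\{-\lambda_1(L_\Omega),0\}$, and with the boundedness of $|\nabla\chi|^2+|\nabla\eta|^2$, yields $Q(u,u)\ge c\|u\|^2_{L^2(\Sigma_f)}$ for all $u\in C^\infty_c(\Sigma)$ and some $c\in\mathbb{R}$. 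Hence $L$ is bounded from below; since $\Sigma$ is complete this makes $L$ essentially self--adjoint, and part (i) gives $\mathrm{Ind}^L(\Sigma)=\widetilde{\mathrm{Ind}}^L(\Sigma)=N<+\infty$.

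I expect the main obstacle to be the careful passage, in part (i), among the self--adjoint domain $\mathcal{D}(L_\Sigma)$, the quadratic--form domain, and $C^\infty_c(\Sigma)$ --- that is, making the density and perturbation statements precise and uniform --- and, in part (ii), the two moves of localizing the obstruction to stability on a compact set and then gluing the compact and non--compact pieces via the IMS formula; the remaining bookkeeping (min--max, cores, local boundedness of the potential) is formal once these are in place.
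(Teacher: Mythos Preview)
Your argument is correct, but it takes a genuinely different route from the paper's own proof. The paper does not argue directly in the weighted setting at all: it observes that the multiplication map $T(u)=e^{-f/2}u$ is a unitary isomorphism from $L^2(\Sigma)$ onto $L^2(\Sigma_f)$ conjugating $L=-\Delta_f-q$ into an ordinary Schr\"odinger operator $S=-\Delta-(p+q)$ on $L^2(\Sigma)$ with $p=\tfrac14|\nabla f|^2-\tfrac12\Delta f$, and then simply invokes the corresponding unweighted statement (Theorem~3.17 in \cite{PRS_Progress}). Your approach instead unpacks what is essentially the content of that cited theorem and carries it out directly for $\Delta_f$: the identification $\mathrm{Ind}^L(\Sigma)=J(\Sigma)$ via min--max on relatively compact domains, the core approximation in graph norm for~(i), and, for~(ii), the localization of the instability on a compact set followed by the IMS decomposition to obtain semiboundedness. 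The paper's route is terse and leans on the unitary-equivalence trick plus an external reference; yours is self-contained and shows that nothing specific to the unweighted Laplacian is used, at the price of reproducing a fair amount of standard spectral-theoretic machinery. Both lead to the same conclusion, and the step you flag as the delicate one---the graph-norm approximation among $\mathcal{D}(L_\Sigma)$, the form domain, and $C^\infty_c(\Sigma)$---is exactly where the work lies in either account.
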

The proof of the previous theorem is a consequence of 
Theorem 3.17 in \cite{PRS_Progress} taking into account 
that $L=-\Delta_f-q\left(x\right)$ is unitarily equivalent to the Schr\"odinger operator 
\[
\ S=-\Delta-\left[\left(1/4\left\langle \nabla f,\nabla f\right\rangle-1/2\Delta f\right)-
q\left(x\right)\right]=-\Delta-\left(p\left(x\right)+q\left(x\right)\right)
\]
under the multiplication map 
$T(u)=e^{-f/2}u$ of $L^2(\Sigma)$ onto $
L^2(\Sigma_f)$ (see for instance \cite{Setti_RSMUP}).

\subsection{f--parabolicity of weighted manifolds}
Following classical terminology in linear potential theory we say that
a weighted manifold $\Sigma_f$ is $f$--\textit{parabolic} if
\[
\begin{cases}
\Delta_{f}u\geq0\\
u^{\ast}=\sup_{M}u<+\infty
\end{cases}
\quad \Rightarrow \quad u\equiv u^{\ast}.
\]
As a matter of fact, $f$--parabolicity is related to a wide class of equivalent properties
involving the recurrence of the Brownian motion, $f$--capacities of condensers, 
the heat kernel associated to the drifted laplacian, weighted volume growth, 
function theoretic tests, global divergence theorems
and many other geometric and potential-analytic properties. 
Here we limit ourselves to point out the following characterization.

\begin{theorem}\label{thm_KNR}
A weighted manifold $\Sigma_f$ is $f$--parabolic if and only if for every 
vector field $X$ satisfying
\begin{itemize}
\item[(i)] $|X|\in L^2(\Sigma_f)$,
\item[(ii)] $(\mathrm{div}_fX)_-\in L^{1}_{loc}(\Sigma_f)$
\end{itemize}
it holds
\[
\int_\Sigma \mathrm{div}_f(X)e^{-f}d\mathrm{vol}_\Sigma=0,
\]
where
$$\mathrm{div}_f(X)=e^f\mathrm{div}(e^{-f}X).$$
\end{theorem}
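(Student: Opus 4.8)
The plan is to read the statement off the standard potential--theoretic description of $f$--parabolicity in terms of $f$--capacity. Recall that, among the equivalent characterizations alluded to above, one has: $\Sigma_f$ is $f$--parabolic if and only if $\mathrm{cap}_f(K,\Sigma)=0$ for some (equivalently, every) compact $K\subset\Sigma$ with non--empty interior, where
\[
\mathrm{cap}_f(K,\Sigma)=\inf\Big\{\int_\Sigma|\nabla\phi|^2e^{-f}d\mathrm{vol}_\Sigma\ :\ \phi\in C^\infty_c(\Sigma),\ 0\leq\phi\leq1,\ \phi\equiv1\ \text{on}\ K\Big\};
\]
equivalently, $\Sigma_f$ is $f$--parabolic precisely when there is a sequence $\{\phi_j\}\subset C^\infty_c(\Sigma)$ with $0\leq\phi_j\leq1$, $\phi_j\uparrow1$ pointwise, and $\int_\Sigma|\nabla\phi_j|^2e^{-f}d\mathrm{vol}_\Sigma\to0$ (take $\phi_j\equiv1$ on an exhaustion of $\Sigma$ by compacta and use that every such $f$--capacity vanishes). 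Granting this, both implications are short.

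For the direct implication I would assume $\Sigma_f$ is $f$--parabolic, let $X$ satisfy $(i)$ and $(ii)$, and pick cutoffs $\{\phi_j\}$ as above. Since $\mathrm{div}_f(X)e^{-f}=\mathrm{div}(e^{-f}X)$ and $\phi_j$ has compact support, an integration by parts gives
\[
\int_\Sigma\phi_j\,\mathrm{div}_f(X)\,e^{-f}d\mathrm{vol}_\Sigma=-\int_\Sigma\langle X,\nabla\phi_j\rangle\,e^{-f}d\mathrm{vol}_\Sigma,
\]
whose right--hand side is at most $\|X\|_{L^2(\Sigma_f)}\big(\int_\Sigma|\nabla\phi_j|^2e^{-f}d\mathrm{vol}_\Sigma\big)^{1/2}$ by Cauchy--Schwarz, hence tends to $0$ by $(i)$ and the choice of $\phi_j$. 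On the left I would split $\mathrm{div}_f(X)$ into its positive and negative parts and apply monotone convergence to each (here $\phi_j\uparrow1$, and the negative part is integrable by $(ii)$); since the left side tends to $0$, comparing forces $\int_\Sigma(\mathrm{div}_fX)_+e^{-f}=\int_\Sigma(\mathrm{div}_fX)_-e^{-f}<+\infty$, i.e. $\int_\Sigma\mathrm{div}_f(X)e^{-f}d\mathrm{vol}_\Sigma=0$.

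For the converse I would argue by contraposition. If $\Sigma_f$ is not $f$--parabolic, fix a compact $K$ with smooth boundary and non--empty interior such that $\mathrm{cap}_f(K,\Sigma)=:c>0$, and let $h$ be its $f$--equilibrium potential, obtained as the monotone limit of the solutions of $\Delta_fh_R=0$ on $\Omega_R\setminus K$ with $h_R=1$ on $\partial K$ and $h_R=0$ on $\partial\Omega_R$, for an exhaustion $\Omega_R\uparrow\Sigma$; thus $0\leq h\leq1$, $h\equiv1$ on $K$, $\Delta_fh=0$ on $\Sigma\setminus K$, and $\int_\Sigma|\nabla h|^2e^{-f}d\mathrm{vol}_\Sigma=c$. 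Put $v:=1-h$ and $X:=\nabla v$. Then $|X|=|\nabla h|\in L^2(\Sigma_f)$, so $(i)$ holds; and $v$ is $f$--subharmonic on $\Sigma$ --- it vanishes on $K$, is $f$--harmonic on $\Sigma\setminus K$, and the jump term across $\partial K$ has the sign dictated by the Hopf lemma --- so $\mathrm{div}_f(X)=\Delta_fv\geq0$ and $(\mathrm{div}_fX)_-\equiv0$, giving $(ii)$. On the other hand $\mathrm{div}_f(X)=\Delta_fv$ is a non--negative, finite (distributional) measure concentrated on $\partial K$, whose total weighted mass equals, by the divergence theorem on $\Omega_R\setminus K$ together with $\mathrm{cap}_f(K,\Omega_R)\searrow c$, the flux of $\nabla h$ into $K$, namely $c$. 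Hence $\int_\Sigma\mathrm{div}_f(X)e^{-f}d\mathrm{vol}_\Sigma=c\neq0$, contradicting the hypothesis; therefore $\Sigma_f$ must be $f$--parabolic.

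The main obstacle is not either of the two short computations but the interface with potential theory: one must have available (or explicitly quote) the equivalence between $f$--parabolicity and vanishing $f$--capacity, together with the basic properties of the $f$--equilibrium potential --- its existence and boundedness, the identity relating its weighted Dirichlet energy to $\mathrm{cap}_f(K,\Sigma)$, and the behaviour at infinity that makes the outer flux terms disappear along the exhaustion. One must also keep track of the sense in which $\mathrm{div}_f(X)$ is taken --- distributionally --- so that the capacitary mass sitting on $\partial K$ is genuinely detected; this is exactly why $(ii)$ is naturally a one--sided condition on $(\mathrm{div}_fX)_-$ rather than a demand that $\mathrm{div}_f(X)$ be a function.
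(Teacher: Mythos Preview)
Your argument is essentially correct, but it takes a genuinely different route from the paper's. The paper does \emph{not} redo the capacity theory in the weighted setting; instead it passes to the warped product $\overline{\Sigma}=\Sigma\times_{e^{-f}}\mathbb{T}$, invokes the fact (from \cite{RV}) that $\Sigma_f$ is $f$--parabolic if and only if $\overline{\Sigma}$ is parabolic in the ordinary sense, and then quotes the unweighted Kelvin--Nevanlinna--Royden criterion on $\overline{\Sigma}$ (Theorem~7.27 in \cite{PRS_Progress}). The dictionary $\mathrm{div}_f X=\overline{\mathrm{div}}\,Y$, together with the obvious volume identity $d\mathrm{vol}_{\overline{\Sigma}}=e^{-f}d\mathrm{vol}_\Sigma\,dt$, transfers both integrability hypotheses and the vanishing of the global divergence integral back and forth. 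What this buys is brevity: no equilibrium potentials, no Hopf lemma, and no need to revisit the $\infty-\infty$ bookkeeping in the monotone--convergence step. What your approach buys is self--containment: you reprove KNR directly in the weighted category, using only the capacity characterization of $f$--parabolicity and an explicit test vector field.

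One caveat on your converse. The field $X=\nabla(1-h)$ built from the equilibrium potential has $\mathrm{div}_f X=\Delta_f(1-h)$ equal to a nonnegative \emph{measure} supported on $\partial K$, not a locally integrable function; you note this, but the hypothesis $(\mathrm{div}_fX)_-\in L^1_{loc}(\Sigma_f)$ as written presumes a function. If you want to stay strictly within the class of $X$ with function--valued $f$--divergence, the cleanest fix is to replace $h$ by its composition with a smooth convex function (or to work with the finite--domain potentials $h_R$ and an approximation), so that $\Delta_f$ of the modified potential is an honest nonnegative $L^1_{loc}$ function whose weighted integral is still the positive capacity. The paper sidesteps this entirely by outsourcing the issue to the unweighted statement in \cite{PRS_Progress}.
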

We refer the reader to \cite[Theorem 7.27]{PRS_Progress} for a detailed proof of the 
previous result in the unweighted setting. Although the proof of this theorem can be deduced 
adapting to the weighted laplacian $\Delta_{f}$ the proof in \cite{PRS_Progress}, 
we provide here a shorter and more direct proof.
\begin{proof}[Proof of Theorem \ref{thm_KNR}]
We consider the warped product $\overline{\Sigma}=\Sigma\times_h\mathbb{T}$, 
where $h:=e^{-f}$ and $\mathbb{T}=\mathbb{R}/\mathbb{Z}$, so that $\mathrm{vol}(\mathbb{T})=1$. 
We first note that, as proven in \cite[Lemma 2.6]{RV}, the weighted manifold  $\Sigma_f$ is 
$f$--parabolic if and only if $\overline{\Sigma}$ is parabolic. Moreover, as a consequence 
of the Kelvin--Nevanlinna--Royden criterion (see \cite[Theorem 7.27]{PRS_Progress}), 
the parabolicity of $\overline{\Sigma}$ is equivalent to the fact 
that, for every vector field $Y\in T\overline{\Sigma}$ satisfying
\begin{itemize}
 \item[(a)] $|Y|\in L^2(\overline{\Sigma})$;
 \item[(b)] $(\overline{\mathrm{div}})_-(Y)\in L^1_{loc}(\overline{\Sigma})$
\end{itemize}
it holds
\[
\int_{\overline{\Sigma}}\overline{\mathrm{div}}(Y)d\mathrm{vol}_{\overline{\Sigma}}=0.
\]
Here we have denoted by $\overline{\mathrm{div}}$ the divergence with respect to the metric
$g_{\overline{\Sigma}}=\pi_\Sigma^*(g_\Sigma)+h^2\pi_{T}^*(dt^2)$, where $\pi_\Sigma$ and
$\pi_\mathbb{T}$ denote the projections of $\overline{\Sigma}$ onto $\Sigma$ and $\mathbb{T}$
respectively. Given a vector field $Y\in T\overline{\Sigma}$, we let 
$X=(\pi_\Sigma)_*(Y)$. Taking into account
that $d\mathrm{vol}_{\overline{\Sigma}}=e^{-f}d\mathrm{vol}_\Sigma dt$ and using the formulas
for covariant derivatives on warped products (see \cite{O}), it is not difficult
to prove that 
\begin{align*}
\mathrm{div}_f( X) &=\overline{\mathrm{div}}(Y),\\
\int_\Sigma |X|^2 e^{-f}d\mathrm{vol}_\Sigma&\leq 
\int_{\overline{\Sigma}}|Y|^2 d\mathrm{vol}_{\overline{\Sigma}}\\
\int_\Sigma (\mathrm{div}_f)_- (X) e^{-f}d\mathrm{vol}_\Sigma&= 
\int_{\overline{\Sigma}}(\overline{\mathrm{div}})_-(Y) d\mathrm{vol}_{\overline{\Sigma}}.
\end{align*}
Now assume that $\Sigma$ is $f$--parabolic and let $X\in T\Sigma$ be a vector field satisfying 
the integrability conditions $(i)$ and $(ii)$. Define $Y\in T\overline{\Sigma}$ by $Y_{(x,t)}=X_x$.  
Then $(\pi_\Sigma)_*Y=X$, $|X|^2=|Y|^2$ and $Y$ satisfies the integrability conditions 
$(a)$ and $(b)$. Moreover, since $\overline{\Sigma}$
is parabolic, it holds
\[
0=\int_{\overline{\Sigma}}\overline{\mathrm{div}}(Y)d\mathrm{vol}_{\overline{\Sigma}}
=\int_\Sigma \mathrm{div}_f(X)e^{-f}d\mathrm{vol}_\Sigma, 
\]
proving that $f$--parabolicity is a sufficient condition for the validity of the global (weighted)
version of the Stokes theorem.

As for the converse, assume that for every vector field 
$X \in T\Sigma$ satisfying the integrability conditions $(i)$ and $(ii)$ it holds
\[
\int_\Sigma \mathrm{div}_f(X)e^{-f}d\mathrm{vol}_\Sigma=0.
\]
Suppose by contradiction that $\Sigma$ is not $f$--parabolic.
Thus $\overline{\Sigma}$ is not parabolic and there exists $Y\in T\overline{\Sigma}$ satisfying the
integrability conditions in $(a)$ and $(b)$ and such that
\[
\int_{\overline{\Sigma}}\overline{\mathrm{div}}(Y)d\mathrm{vol}_{\overline{\Sigma}}\neq0. 
\]
Set $X=(\pi_\Sigma)_*(Y)$. Then $X$ 
satisfies the integrability conditions $(i)$ and $(ii)$. However
\[
\int_\Sigma \mathrm{div}_f(X)e^{-f}d\mathrm{vol}_\Sigma=
\int_{\overline{\Sigma}}\overline{\mathrm{div}}(Y) d\mathrm{vol}_{\overline{\Sigma}}\neq 0,
\]
leading to a contradiction.
\end{proof}

From the geometric point of view, it is well known
that $f$--parabolicity is related to the growth rate of 
the weighted volume of intrinsic metric objects. 
Indeed, adapting to 
the diffusion operator $\Delta_{f}$ standard proofs for the Laplace--Beltrami operator 
(see for instance \cite{Gr}, \cite{RS}), one can prove the following
\begin{proposition}
Let $\Sigma_f$ be a weighted manifold. If
\begin{equation}\label{eq_fparspheres}
\mathrm{vol}_{f}\left(  \partial B_{r}(o)\right)  ^{-1}\notin L^{1}\left(
+\infty\right),
\end{equation}
then $\Sigma_f$ is $f$--parabolic.
\end{proposition}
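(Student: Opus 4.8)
The plan is to deduce this from the capacity characterization of parabolicity, which in the weighted setting reads: $\Sigma_f$ is $f$--parabolic if and only if the $f$--capacity of some (equivalently, every) nonempty relatively compact open set vanishes. Accordingly, after fixing $o\in\Sigma$ and $r_0>0$, I would reduce the statement to showing
\[
\mathrm{cap}_f\bigl(\overline{B_{r_0}(o)},\Sigma\bigr):=\lim_{R\to+\infty}\mathrm{cap}_f\bigl(\overline{B_{r_0}(o)},B_R(o)\bigr)=0,
\]
where, for $R>r_0$,
\[
\mathrm{cap}_f\bigl(\overline{B_{r_0}(o)},B_R(o)\bigr)=\inf\Bigl\{\int_\Sigma|\nabla\phi|^2e^{-f}\,d\mathrm{vol}_\Sigma\ :\ \phi\in\mathrm{Lip}_c(B_R(o)),\ \phi\equiv1\ \text{on}\ \overline{B_{r_0}(o)}\Bigr\}.
\]
This equivalence, together with the fact that a relatively compact exhaustion of $\Sigma$ may be taken by condensers of the form $(\overline{B_{r_0}(o)},B_R(o))$, is obtained exactly as in the unweighted case (see \cite[Chapter 7]{PRS_Progress}, \cite{Gr}, \cite{RS}), replacing $\mathrm{div}$, $\Delta$ and $d\mathrm{vol}_\Sigma$ throughout by $\mathrm{div}_f$, $\Delta_f$ and $e^{-f}\,d\mathrm{vol}_\Sigma$.

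The second step is the capacity estimate through radial test functions. Set $t(x)=\mathrm{dist}_\Sigma(o,x)$; it is Lipschitz with $|\nabla t|=1$ a.e., and for a.e. $s>0$ the level set $\partial B_s(o)=\{t=s\}$ has $\mathrm{vol}_f(\partial B_s(o))=\int_{\partial B_s(o)}e^{-f}\,dA_\Sigma<+\infty$, since by the coarea formula $\int_0^{R}\mathrm{vol}_f(\partial B_s(o))\,ds=\mathrm{vol}_f(B_R(o))<+\infty$. Given $R>r_0$, I would take $\phi=\psi\circ t$ with
\[
\psi(s)=
\begin{cases}
1, & 0\le s\le r_0,\\[2mm]
\left(\displaystyle\int_{r_0}^{R}\frac{d\sigma}{\mathrm{vol}_f(\partial B_\sigma(o))}\right)^{-1}\displaystyle\int_{s}^{R}\frac{d\sigma}{\mathrm{vol}_f(\partial B_\sigma(o))}, & r_0\le s\le R,\\[4mm]
0, & s\ge R.
\end{cases}
\]
This $\phi$ is admissible for the condenser $(\overline{B_{r_0}(o)},B_R(o))$, and the weighted coarea formula gives
\[
\int_\Sigma|\nabla\phi|^2e^{-f}\,d\mathrm{vol}_\Sigma=\int_{r_0}^{R}\psi'(s)^2\,\mathrm{vol}_f(\partial B_s(o))\,ds=\left(\int_{r_0}^{R}\frac{ds}{\mathrm{vol}_f(\partial B_s(o))}\right)^{-1},
\]
so that $\mathrm{cap}_f(\overline{B_{r_0}(o)},B_R(o))\le\bigl(\int_{r_0}^{R}\mathrm{vol}_f(\partial B_s(o))^{-1}\,ds\bigr)^{-1}$. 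Letting $R\to+\infty$ and invoking hypothesis \eqref{eq_fparspheres} then yields $\mathrm{cap}_f(\overline{B_{r_0}(o)},\Sigma)=0$, hence the $f$--parabolicity of $\Sigma_f$. (The degenerate cases --- $\Sigma$ compact, or $\mathrm{vol}_f(\partial B_s(o))$ vanishing on a positive measure set near infinity --- are respectively trivial and incompatible with the hypothesis.)

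The main obstacle is the first step: the equivalence between $f$--parabolicity (as defined through the maximum principle for bounded $\Delta_f$--subsolutions) and the vanishing of the $f$--capacity of a compact set. This is classical in the Riemannian case and carries over to $\Delta_f$ with only notational changes, but it is where some genuine potential theory --- construction of the capacitary potential and its relation to bounded $\Delta_f$--harmonic functions --- enters; the remaining ingredients (Lipschitz regularity of $t$, so the cut locus does not contribute, and the weighted coarea formula) are routine. As an alternative avoiding capacity theory, one could argue directly from the weighted Kelvin--Nevanlinna--Royden criterion just proved in Theorem \ref{thm_KNR}: if $\Sigma_f$ were not $f$--parabolic there would exist $X$ with $|X|\in L^2(\Sigma_f)$, $(\mathrm{div}_fX)_-\in L^1_{loc}(\Sigma_f)$ and $I:=\int_\Sigma\mathrm{div}_f(X)e^{-f}\,d\mathrm{vol}_\Sigma\neq0$; the divergence theorem on $B_r(o)$ together with the Cauchy--Schwarz inequality on $\partial B_r(o)$ give, for all $r$ large,
\[
\frac{|I|^2}{4}\le\left(\int_{\partial B_r(o)}|X|^2e^{-f}\,dA_\Sigma\right)\mathrm{vol}_f(\partial B_r(o)),
\]
and dividing by $\mathrm{vol}_f(\partial B_r(o))$ and integrating in $r$ would contradict $|X|\in L^2(\Sigma_f)$ by \eqref{eq_fparspheres}. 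In this variant the only delicate point is the regularity needed to apply the divergence theorem on geodesic balls to $X$.
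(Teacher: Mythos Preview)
Your proposal is correct. The paper does not actually prove this proposition: it simply asserts that the result follows by ``adapting to the diffusion operator $\Delta_f$ standard proofs for the Laplace--Beltrami operator (see for instance \cite{Gr}, \cite{RS})''. Your capacity argument with the radial test function $\psi\circ t$ is precisely that standard proof, so your main approach is exactly what the paper has in mind. Your alternative route via Theorem \ref{thm_KNR} is a genuine addition not suggested by the paper; it is pleasant because it stays entirely within the tools the paper has just developed, though as you correctly flag it requires $\mathrm{div}_f X\in L^1(\Sigma_f)$ (so that $\int_{B_r}\mathrm{div}_f X\,e^{-f}\to I$) and enough regularity of $X$ to apply the divergence theorem on geodesic balls, points that would need to be extracted from the full Kelvin--Nevanlinna--Royden statement rather than the abbreviated version recorded here.
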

Here $\partial B_r(o)$ denotes the geodesic ball of radius $r$ centered at a reference point 
$o\in\Sigma$ and
\[
\mathrm{vol}_{f}\left(  \partial B_{r}\right)=\int_{\partial B_r}e^{-f}d\mathrm{vol}_{m-1},
\]
where $d\mathrm{vol}_{m-1}$ denotes the $(m-1)$--dimensional Hausdorff measure.

Observe also that if $\mathrm{vol}_f(\Sigma)<+\infty$, then 
condition \eqref{eq_fparspheres} is automatically satisfied. Hence we conclude that any
weighted manifold $\Sigma_f$ satisfying $\mathrm{vol}_f(\Sigma)<+\infty$ is $f$--parabolic.
\begin{remark}\label{rmk_fparss}
\rm{
It was proved in \cite{ChZh} and \cite{DX}
that for any complete immersed self-shrinker $x:\Sigma^m\to\mathbb{R}^{m+1}$ the following statements are 
equivalent:
\begin{itemize}
	\item[(a)] the immersion $x$ is proper;
	\item[(b)] $\Sigma$ has extrinsic polynomial volume growth;
	\item[(c)] $\Sigma$ has extrinsic Euclidean volume growth;
	\item[(d)] $\Sigma$ has finite $f$--volume, where $f=|x|^2/2$.	
\end{itemize}
In particular, the equivalence $(a)$--$(d)$ shows that any complete self--shrinker properly 
immersed in $\mathbb{R}^{m+1}$ is $f$--parabolic, with $f=|x|^2/2$.
}
\end{remark}

\section{Characterization of low index properly immersed self--shrinkers}
Let $x:\Sigma^m\to\mathbb{R}^{m+1}$ be a complete self-shrinker and set $f=\frac{|x|^2}{2}$.
The function $f$ induces a weighted structure on the self--shrinker that can hence be viewed 
as a weighted manifold itself. 
Basic geometric quantities on the self-shrinker satisfy 
identities which naturally involve the $f$-Laplacian on 
$\Sigma_{f}$, as shown in the next
\begin{proposition}\label{prop_eigenjacobi}
Let $\Sigma^m$ be a self--shrinker in $\mathbb{R}^{m+1}$ and let 
$a\in\mathbb{R}^{m+1}$ be a constant vector. Then 
\begin{enumerate}
 \item the mean curvature $H$ satisfies
\[
\Delta_f H= (1-\| A \|^2)H.
\]
 \item The function $g_a=\langle \nu, a\rangle$ satisfies
\[
\Delta_f g_a=- \| A \|^2 g_a.
\]
 \item The function $l_a=\langle x, a\rangle$ satisfies
\[
\Delta_f l_a=- l_a.
\]
\item the squared norm of the second fundamental form satisfies the Simons type formula
\[
\Delta_f \|A\|^2=2|\nabla A|^2+2\|A\|^2(1-\|A\|^2).
\]
\end{enumerate}
\end{proposition}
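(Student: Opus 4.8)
The plan is to establish the four identities by direct computation using the self-shrinker equation $\langle x,\nu\rangle=-H$ together with the standard moving-frame formulas for hypersurfaces in $\mathbb{R}^{m+1}$. The key observation is that for any function $u$ arising as a geometric quantity, one computes $\Delta u$ via the Gauss and Codazzi equations, and then corrects by $-\langle\nabla u,\nabla f\rangle=-\langle\nabla u,x^{\top}\rangle$ to pass to $\Delta_f$; the term $x^{\top}$ will repeatedly interact with the shrinker equation to produce the lower-order terms on the right-hand sides.

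First I would set up a local orthonormal frame $\{e_i\}$ on $\Sigma$ with $\nabla_{e_i}e_j|_p=0$ at the point $p$ under consideration, and record the basic structure equations: $\overline\nabla_{e_i}e_j=\nabla_{e_i}e_j+A_{ij}\nu$ (so $A_{ij}=\langle Ae_i,e_j\rangle$ and $H=\sum_i A_{ii}$), $\overline\nabla_{e_i}\nu=-\sum_j A_{ij}e_j$, and $\overline\nabla_{e_i}x=e_i$. From these, $\nabla l_a=a^{\top}$ and a short computation gives $\Delta l_a=\sum_i\langle\overline\nabla_{e_i}a^{\top},e_i\rangle=H\langle\nu,a\rangle$. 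Since $\nabla f=x^{\top}$, we get $\langle\nabla l_a,\nabla f\rangle=\langle a^{\top},x^{\top}\rangle=\langle a,x\rangle-\langle a,\nu\rangle\langle x,\nu\rangle=l_a+Hg_a$; subtracting yields $\Delta_f l_a=H g_a-(l_a+Hg_a)=-l_a$, which is item (3). For item (2), $\nabla g_a=-A(a^{\top})$, and differentiating again while invoking the Codazzi equation $\nabla_iA_{jk}=\nabla_jA_{ik}$ together with the traced Codazzi identity $\sum_j\nabla_jA_{ij}=\nabla_i H$, one finds $\Delta g_a=-\langle\nabla H,a^{\top}\rangle-\|A\|^2 g_a+H\langle Ae_i,e_i\rangle\cdot(\text{correction})$; the cross term $\langle\nabla g_a,\nabla f\rangle=-\langle A(a^{\top}),x^{\top}\rangle$ combines with the shrinker equation $x^{\top}=\nabla f$ and the known fact $\nabla H=-A(x^{\top})$ (itself a consequence of differentiating $H=-\langle x,\nu\rangle$) to cancel the gradient terms and leave $\Delta_f g_a=-\|A\|^2 g_a$.

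Item (1) follows by specializing or by direct computation: from $H=-\langle x,\nu\rangle$ one gets $\nabla H=-A(x^{\top})$ as noted, hence $\langle\nabla H,\nabla f\rangle=-\langle A(x^{\top}),x^{\top}\rangle$; computing $\Delta H$ by differentiating $\nabla H=-A(x^{\top})$ and using Codazzi plus $\overline\nabla_{e_i}x^\top = e_i - A_{ij}l_a\cdots$ (the tangential derivative of $x^{\top}$), one obtains $\Delta H=-H+\langle A(x^\top),x^\top\rangle - \|A\|^2 H$ modulo bookkeeping, and adding the correction term produces $\Delta_f H=(1-\|A\|^2)H$. Finally, item (4) is the Simons-type identity: starting from the classical Simons formula for $\Delta\|A\|^2$ in a hypersurface of $\mathbb{R}^{m+1}$, namely $\tfrac12\Delta\|A\|^2=|\nabla A|^2+\sum_{ij}A_{ij}(\nabla^2 H)_{ij}+H\,\mathrm{tr}(A^3)-\|A\|^4$, one substitutes the shrinker relations for $H$ and its derivatives — in particular differentiating $\Delta_f H=(1-\|A\|^2)H$ twice and feeding the result back in — and converts $\Delta$ to $\Delta_f$; after the cubic terms $H\,\mathrm{tr}(A^3)$ cancel against contributions coming from $\langle\nabla\|A\|^2,\nabla f\rangle$, the identity collapses to $\Delta_f\|A\|^2=2|\nabla A|^2+2\|A\|^2(1-\|A\|^2)$.

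The main obstacle is item (4): carefully tracking all the Simons-formula terms and the extra terms generated by the weight and by repeated use of the shrinker equation, and checking that the cubic-in-$A$ contributions genuinely cancel. Items (2) and (3) are essentially immediate once the frame computations are organized, and (1) follows the same pattern; (4) is where one must be most careful not to drop or misplace a term, and it is the identity on which the later index estimates most heavily rely.
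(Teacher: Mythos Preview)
Your approach is correct and is precisely the standard one: the paper does not actually prove this proposition but refers the reader to Huisken \cite[Theorem~4.1]{Huisken} and Colding--Minicozzi \cite[Theorem~5.2]{CoMi}, and the direct moving-frame computations you outline are exactly what those references carry out. One small slip to fix when you write it up carefully: differentiating $H=-\langle x,\nu\rangle$ yields $\nabla H = A(x^{\top})$, not $-A(x^{\top})$; this does not affect your conclusions, since the sign self-corrects in the combination $\Delta g_a-\langle\nabla g_a,\nabla f\rangle$ and likewise in item~(1).
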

We refer the reader to \cite[Theorem 4.1]{Huisken} and \cite[Theorem 5.2]{CoMi} 
for a proof of the identities listed above.

It turns out that stability properties of self--shrinkers, viewed as
critical points of the weighted area functional, are taken into account by spectral
properties of the \textit{weighted Jacobi operator} $L_f$,
defined as
\[
L_f u=-\Delta_f u-(\|A\|^2+1)u.
\]
To be more precise, we say that a self--shrinker $\Sigma$ is $f$--\textit{stable}
if and only if the operator $L_f$ is non--negative, that is if and only if
\[
\lambda_1^{L_f}(\Sigma)\geq 0.
\]
Furthermore, we define the $f$--Index of $\Sigma$ to be the Morse index of the 
Jacobi operator $L_f$, that is
\[
\mathrm{Ind}_f(\Sigma):=\sup\{\mathrm{Ind}((L_f)_\Omega)\ :\ \Omega\subset\subset\Sigma \}.
\]
\begin{remark}\label{rmk_domainQ}
\rm{Keeping in mind Theorem \ref{thm_eqfinitind} we see that if $\mathrm{Ind}_f(\Sigma)<+\infty$, 
then the weighted Jacobi operator $L_f$ is bounded from below, it is essentially
self--adjoint and 
$\mathrm{Ind}_f(\Sigma)=\widetilde{\mathrm{Ind}}^{L_f}(\Sigma)$. Moreover, the domain of 
the quadratic form $Q(\cdot,\cdot)=(L_f\cdot,\cdot)_{L^2(\Sigma_f)}$ is the space
\[
\mathcal{V}=\{u\in W^{1,2}(\Sigma_f)\ :\ \|A\|u\in L^2(\Sigma_f)\}. 
\]
Furthermore, we point out that, if $u$ is an eigenfunction of the Jacobi operator $L_f$ and $u\in W^{1,2}(\Sigma_f)$,
then Lemma 9.15 in \cite{CoMi} implies that $u\in \mathcal{V}$.
Finally, if $\mathrm{Ind}_f(\Sigma)<+\infty$, then $\lambda_1^{L_f}(\Sigma)>-\infty$ and 
one can prove the existence of a positive $C^2$ function $v$ satisfying
$L_f v=\lambda_1^{L_f}(\Sigma) v$ (see Lemma 9.25 in \cite{CoMi}). Then, applying again Lemma 9.15 in \cite{CoMi}, it is 
straightforward to prove that any function $\phi \in W^{1,2}(\Sigma_f)$ belongs to $\mathcal{V}$.
} 
\end{remark}
In the following we collect some lemmas that will be essential for the proof of the 
main result of the paper.

\begin{lemma}\label{lemma_dimV}
Let $\Sigma^m$ be a properly immersed non--totally geodesic self--shrinker in 
$\mathbb{R}^{m+1}$ satisfying $\mathrm{Ind}_f(\Sigma)<+\infty$.
Set
\[
W := \{g_b:\Sigma\rightarrow \mathbb{R},\ g_b(p)=\langle \nu(p), b\rangle 
\ \forall p\in \Sigma,\ b\in\mathbb{R}^{m+1}\}.
\]
Then $\mathrm{dim}W\leq m+1$ and, if $\mathrm{dim}W=k<m+1$, we can find $m+1$ linearly independent 
constant non--null vectors $\{b_1,\ldots,b_{m+1}\}\in\mathbb{R}^{m+1}$ satisfying the following properties:
\begin{enumerate}
\item $W=\mathrm{span}\{g_{b_1},\ldots,g_{b_k}\}$;
\item Set $U:=\mathrm{span}\{g_{b_1},\ldots,g_{b_k},l_{b_{k+1}}H,\ldots,l_{b_{m+1}}H\}$.
Then 
\begin{enumerate}
 \item the functions $l_{b_j}H$, $j=k+1,\ldots,m+1$, are eigenfunctions of $L_f$ 
corresponding to the eigenvalue $-1$;
 \item $\mathrm{dim} U=m+1$;
 \item $U\subset \mathcal{V}$.
\end{enumerate}
Moreover, having set
\[
V:=\{\varphi:\Sigma\rightarrow \mathbb{R},\ \varphi(p)=a+\phi(p) 
\ \forall p\in \Sigma,\ a\in\mathbb{R},\
\phi\in U\}\subset \mathcal{V},
\]
one has $\mathrm{dim}V=m+2$. 
\end{enumerate}
\end{lemma}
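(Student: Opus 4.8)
\emph{Plan of proof.} The strategy is to first pin down $\dim W$, then construct the vectors $b_j$, and finally assemble $U$ and $V$ so that the dimension count works out. For the bound $\dim W\leq m+1$, I would observe that the map $\mathbb{R}^{m+1}\ni b\mapsto g_b\in C^\infty(\Sigma)$ is linear, so it suffices to show it is injective, i.e. that $g_b\equiv 0$ forces $b=0$. If $\langle\nu(p),b\rangle=0$ for all $p$, then $b$ is everywhere tangent to $\Sigma$; combined with completeness and connectedness this would force $\Sigma$ to be contained in a hyperplane orthogonal to\dots actually the cleaner route is: $g_b\equiv 0$ means the constant vector field $b$ is tangent along $\Sigma$, and then the self-shrinker equation together with $\Delta_f g_b=-\|A\|^2 g_b$ (item (2) of Proposition \ref{prop_eigenjacobi}) is automatically consistent, so I instead argue that $g_b\equiv 0$ implies $b^\top=b$ is a parallel tangent vector field, whence $\Sigma$ splits as $\mathbb{R}\times\Sigma'$ with $A$ vanishing in the $b$-direction; iterating, if $\dim\ker>0$ we still get $\dim W\le m+1$ simply because $\dim\mathbb{R}^{m+1}=m+1$. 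So in fact $\dim W\leq m+1$ is immediate from linearity, with no shrinker input needed.

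\emph{Choosing the basis.} Now suppose $k:=\dim W<m+1$. Pick $b_1,\dots,b_k\in\mathbb{R}^{m+1}$ so that $g_{b_1},\dots,g_{b_k}$ are a basis of $W$; after a linear change we may and do assume $b_1,\dots,b_k$ are linearly independent and non-null (they are non-null since otherwise $b_j=0$ gives $g_{b_j}=0$). Complete to a basis $b_1,\dots,b_{m+1}$ of $\mathbb{R}^{m+1}$ with the remaining $b_{k+1},\dots,b_{m+1}$ chosen independent and non-null; this proves (1). For (2a), I use item (1) and item (3) of Proposition \ref{prop_eigenjacobi}: a direct computation of $\Delta_f(l_b H)$ via the product rule gives
\[
\Delta_f(l_b H)=(\Delta_f l_b)H+l_b\Delta_f H+2\langle\nabla l_b,\nabla H\rangle
=-l_bH+l_b(1-\|A\|^2)H+2\langle\nabla l_b,\nabla H\rangle.
\]
The cross term is where the self-shrinker structure enters: one must show $2\langle\nabla l_b,\nabla H\rangle=-2\langle\nu,b\rangle\langle\nabla(\text{something}),\dots\rangle$ cancels against the $-\|A\|^2 l_b H$ to leave exactly $-(1+\|A\|^2)(l_bH)=L_f$-eigenvalue$-1$; more precisely the known fact (Colding--Minicozzi) is that $L_f(\langle x,b\rangle H)=\langle x,b\rangle H$ when... let me instead invoke that $l_{b_j}H$ being an $L_f$-eigenfunction with eigenvalue $-1$ is equivalent to $\Delta_f(l_{b_j}H)=-l_{b_j}H-\|A\|^2 l_{b_j}H+\cdots$ and the identity $\nabla H$ paired with $\nabla l_{b}$ producing $\langle A(b^\top),\cdot\rangle$-type terms; the upshot is a clean cancellation, giving (2a).

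\emph{The dimension counts.} For (2b), $\dim U=m+1$: the elements $g_{b_1},\dots,g_{b_k}$ span $W$ and satisfy $\Delta_f g_{b_i}=-\|A\|^2 g_{b_i}$ (eigenvalue type $-\|A\|^2$, not constant unless $\|A\|^2$ is), while $l_{b_{k+1}}H,\dots,l_{b_{m+1}}H$ are genuine $L_f$-eigenfunctions with eigenvalue $-1$. Linear independence of the whole collection is the crux: a relation $\sum_{i\le k}c_i g_{b_i}+\sum_{j>k}d_j l_{b_j}H=0$ must be shown to force all coefficients zero. Here I would use that $\sum c_i g_{b_i}=g_{\sum c_i b_i}$ while $\sum d_j l_{b_j}H=\langle x,\sum d_j b_j\rangle H = l_c H$ with $c=\sum_{j>k} d_j b_j$; if this equals $-g_{c'}$ then differentiating and using $\Sigma$ non-totally-geodesic (so $H\not\equiv 0$ on an open set, or $\|A\|\not\equiv 0$) together with the distinct eigenvalue-behaviours forces $c=0$, hence all $d_j=0$ by independence of $b_{k+1},\dots,b_{m+1}$, hence all $c_i=0$. \textbf{This linear-independence step is the main obstacle}, because it must reconcile functions that are eigenfunctions in different senses; the non-totally-geodesic hypothesis is exactly what rules out the degenerate coincidences. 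For (2c), $U\subset\mathcal{V}$: each generator lies in $W^{1,2}(\Sigma_f)$ because $g_b,l_bH$ are controlled by $1,|x|,H,\|A\|$ which are all in the weighted $L^2$ and $W^{1,2}$ spaces by the finite $f$-volume (properness, Remark \ref{rmk_fparss}) and the standard Colding--Minicozzi estimates (Lemma 9.15 of \cite{CoMi}, cited in Remark \ref{rmk_domainQ}); and $\|A\|\cdot(g_b$ or $l_bH)\in L^2(\Sigma_f)$ by the same estimates, giving membership in $\mathcal{V}$. Finally $\dim V=m+2$: $V=\mathbb{R}\cdot 1\oplus U$ as sets since every $\varphi=a+\phi$, and $1\notin U$ because a nonzero constant cannot be written as $g_c+l_{c'}H$ (evaluate $\Delta_f$: the left side is $0$ while the right side is $-\|A\|^2 g_c -(1+\|A\|^2)l_{c'}H$ plus lower terms, which vanishes identically only if $c=0,c'=0$ using non-total-geodesy again, but then $a\neq 0$ contradicts $\varphi\in U$). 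The constant function $1$ lies in $\mathcal{V}$ iff $\|A\|\in L^2(\Sigma_f)$, which again holds by the Colding--Minicozzi weighted estimates under properness. Hence $\dim V=1+\dim U=m+2$, as claimed.
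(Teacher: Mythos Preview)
Your proposal has two genuine gaps, both stemming from the same missed observation.

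\textbf{Gap 1: the choice of $b_{k+1},\dots,b_{m+1}$.} You say ``complete to a basis $b_1,\dots,b_{m+1}$ with the remaining $b_{k+1},\dots,b_{m+1}$ chosen independent and non-null''. But you must choose them in the \emph{kernel} of the linear map $b\mapsto g_b$, i.e.\ so that $g_{b_j}\equiv 0$ for $j>k$. This is not cosmetic: it is the engine of (2a). From $g_{b_j}\equiv 0$ one gets $b_j=b_j^\top$ and, differentiating, $0=\nabla g_{b_j}=-A b_j$, so $A b_j=0$. Then in your product-rule computation
\[
\Delta_f(l_{b_j}H)=-\|A\|^2\,l_{b_j}H+2\langle \nabla l_{b_j},\nabla H\rangle,
\]
the cross term is $2\langle b_j,\nabla H\rangle=2\langle b_j,\pm A x^\top\rangle=\pm 2\langle A b_j,x^\top\rangle=0$. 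The cross term does not ``cancel against'' anything; it simply vanishes because $A b_j=0$. Without this choice the identity $L_f(l_bH)=-l_bH$ is false for generic $b$, and your hand-wave about a ``clean cancellation'' does not go through.

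\textbf{Gap 2: linear independence via ``distinct eigenvalue-behaviours'' is wrong.} You argue that $g_{b_i}$ and $l_{b_j}H$ can be separated because they are ``eigenfunctions in different senses''. In fact both are $L_f$-eigenfunctions with the \emph{same} eigenvalue $-1$: from $\Delta_f g_b=-\|A\|^2 g_b$ one gets $L_f g_b=-g_b$, exactly as for $l_{b_j}H$. So no eigenvalue separation is available. The paper's argument is different: assuming $g_b=\lambda\, l_cH$ with $c$ in the kernel above, take gradients to get $-A b^\top=-\lambda l_c A x^\top+\lambda H c$, then pair with $c$. Since $A c=0$ (again the key point from Gap~1), both $\langle A b^\top,c\rangle$ and $\langle A x^\top,c\rangle$ vanish, leaving $\lambda H|c|^2\equiv 0$, hence $H\equiv 0$, contradicting non-total-geodesy. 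Your sketch of ``differentiating and using non-totally-geodesic'' is in the right spirit, but the mechanism you name (eigenvalue distinction) is not the one that works; the actual mechanism is the algebraic identity $Ac=0$ coming from the special choice of $b_{k+1},\dots,b_{m+1}$.

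Once these two points are fixed, the remaining parts (2c) and $\dim V=m+2$ are essentially as in the paper; note that $\|A\|\in L^2(\Sigma_f)$ is obtained there from finite $f$-index via \cite{IR}, not directly from the Colding--Minicozzi estimates.
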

\begin{proof}
We note first that it is not difficult to prove that if $\mathrm{dim}W=k<m+1$, then we can find 
$m+1$ linearly independent 
constant non--null vectors $\{b_1,\ldots,b_{m+1}\}\in\mathbb{R}^{m+1}$ such that
$W=\mathrm{span}\{g_{b_1},\ldots,g_{b_k}\}$ and $g_{b_j}\equiv 0$ for any $j=k+1,\cdots,m+1$.
This proves part $(1)$. As for part $(2a)$, it 
suffices to show that the functions $l_{b_j}H$, 
$j=k+1,\cdots,m+1$, satisfy $L_f l_{b_j}H=-l_{b_j}H$. 
Note that $g_{b_j}\equiv 0$, a simple computation shows that 
the functions $l_{b_j}$ satisfy
\[
\nabla l_{b_j}=b_j^T=b_j,\quad A b_j\equiv 0.
\]
Thus
\begin{align*}
\Delta_f H l_{b_j}&=H\Delta_f l_{b_j}+2\langle\nabla H, \nabla l_{b_j}\rangle+l_{b_j}\Delta_f H\\
&=-H l_{b_j}-2 \langle AX^T,b\rangle+H l_{b_j}(1-\|A\|^2)\\
&=-\|A\|^2H l_{b_j}, 
\end{align*}
showing that the functions $l_{b_j}H$, $j=k+1,\cdots,m+1$, are eigenfunctions of $L_f$ 
corresponding to the eigenvalue $-1$. 

As for part $(2b)$, let us prove first that the functions $g_{b_1},\ldots,\ g_{b_k}$, 
$l_{b_{k+1}}H,\ldots,\ l_{b_{m+1}}H$ 
are linearly independent.
It suffices to show that, for any non-null vectors $b=\alpha_1b_1+\cdots+\alpha_k b_k$, 
$c=\alpha_{k+1}b_{k+1}+\ldots+\alpha_{m+1} b_{m+1}$, the functions 
$g_b$ and $l_c H$ can not be linearly dependent. Indeed, assume by contradiction that there exists a non--zero
constant $\lambda$ such that $g_b=\lambda l_c H$. Then, in particular
\[
-Ab^T=\nabla g_b=\lambda l_c\nabla H+\lambda H\nabla l_c=-\lambda l_c Ax^T+\lambda H c.
\]
In particular, 
\[
0=-\langle A b^T, c\rangle=-\langle\lambda l_c Ax^T, c\rangle+\lambda H |c|^2=\lambda H |c|^2.
\]
Hence $H\equiv 0$ and $\Sigma$ must be a hyperplane through the origin, which is absurd.

In order to prove that $U\subset\mathcal{V}$ it suffices to show, taking into account Remark 
\ref{rmk_domainQ}, 
that $g_{b_i},\ l_{b_j}H\in W^{1,2}(\Sigma_f)$, for any $i=1,\cdots,k$, $j=k+1,\cdots,m+1$.
Clearly, since $\Sigma$ has finite $f$-volume, the Cauchy-Schwartz inequality implies that
the functions $g_{b_i}$ belong to $L^2(\Sigma_f)$. Moreover, since $\Sigma$ has finite $f$--index, 
Theorem 2 in \cite{IR} implies that $\|A\|\in L^2(\Sigma_f)$ and hence, in particular, 
$|\nabla g_{b_i}|\leq |b_i|\| A\|\in L^2(\Sigma_f)$.  
As for the functions $l_{b_j}H$, note that Lemma 25 in \cite{PiRi} implies that any properly 
immersed self-shrinker satisfies $\mathcal{P}(|x|)\in L^{1}(\Sigma_f)$,
with $\mathcal{P}(t)$ any polynomial in $t$. In particular, the function 
$|x|^{2q}$ belongs to the space $W^{1,2}(\Sigma_f)$ for any $q\in \mathbb{N}$ and hence, taking into account Remark \ref{rmk_domainQ}, 
$|x|^{2q}\|A\|\in L^2(\Sigma_f)$. Thus, using the self-shrinker equation, 
it is straightforward to see that
\begin{align*}
|l_{b_j}||H|&\leq |b_j||x||H|\leq |b_j||x|^2\in L^2(\Sigma_f)\\
|\nabla (l_{b_j}H)|&\leq |b_j||x|^2\|A\|+|b_j||x|\in L^{2}(\Sigma_f).
\end{align*}

Finally, it only remains to prove that $\mathrm{dim}V=m+2$. Assume by contradiction that
there exists a non-zero constant $a$ and a function $\phi\in U$ such that 
$\phi=a$. Then
\[
0=\Delta_f \phi=-\|A\|^2\phi=-\|A\|^2 a,
\]
contradicting again the assumption of $\Sigma$ being non--totally geodesic.
\end{proof}
\begin{lemma}\label{prop_ineqsff}
Let $\Sigma^m$ be a properly immersed self--shrinker in $\mathbb{R}^{m+1}$ satisfying $\mathrm{Ind}_f(\Sigma)<+\infty$.
Let $\phi\in U$, where $U$ is defined as in the previous Lemma.
Then
\[
\int_\Sigma \phi\| A\|^2e^{-f}d\mathrm{vol}_\Sigma=0.
\] 
\end{lemma}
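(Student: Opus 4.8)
The plan is to recognise that every $\phi\in U$ is an eigenfunction of the drifted Laplacian with potential $\|A\|^2$, so that the desired identity is just the vanishing of $\int_\Sigma\Delta_f\phi\,e^{-f}d\mathrm{vol}_\Sigma$, and then to obtain this vanishing from the global weighted divergence theorem that holds on $f$--parabolic manifolds.

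First I would record that each generator of $U$ satisfies $\Delta_f(\,\cdot\,)=-\|A\|^2(\,\cdot\,)$ on all of $\Sigma$: for the functions $g_{b_i}$ this is exactly part $(2)$ of Proposition \ref{prop_eigenjacobi}, while for the functions $l_{b_j}H$ it is the computation performed inside the proof of part $(2a)$ of Lemma \ref{lemma_dimV}, where one finds $\Delta_f(H l_{b_j})=-\|A\|^2 H l_{b_j}$ (equivalently $L_f(l_{b_j}H)=-l_{b_j}H$). By linearity, any $\phi\in U$ then satisfies $\Delta_f\phi=-\|A\|^2\phi$, and hence the statement $\int_\Sigma\phi\|A\|^2e^{-f}d\mathrm{vol}_\Sigma=0$ is equivalent to $\int_\Sigma\Delta_f\phi\,e^{-f}d\mathrm{vol}_\Sigma=0$, that is, to the global divergence identity $\int_\Sigma\mathrm{div}_f(X)e^{-f}d\mathrm{vol}_\Sigma=0$ for the smooth vector field $X:=\nabla\phi$.

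Next I would verify the two integrability hypotheses of Theorem \ref{thm_KNR} for $X=\nabla\phi$. Condition $(i)$: by Lemma \ref{lemma_dimV} one has $U\subset\mathcal{V}\subset W^{1,2}(\Sigma_f)$, so in particular $|X|=|\nabla\phi|\in L^2(\Sigma_f)$. Condition $(ii)$: the function $\mathrm{div}_f(X)=\Delta_f\phi=-\|A\|^2\phi$ is continuous on $\Sigma$, hence so is its negative part $(\mathrm{div}_f X)_-$, which is therefore in $L^1_{loc}(\Sigma_f)$. Since $\Sigma$ is properly immersed, Remark \ref{rmk_fparss} gives that $\Sigma_f$ is $f$--parabolic, so Theorem \ref{thm_KNR} applies and yields $\int_\Sigma\mathrm{div}_f(X)e^{-f}d\mathrm{vol}_\Sigma=0$; substituting $\mathrm{div}_f(X)=-\|A\|^2\phi$ gives the claim.

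I do not expect a genuine obstacle here: the only point needing attention is condition $(i)$, i.e. $\phi\in W^{1,2}(\Sigma_f)$, and this has already been checked in the proof of Lemma \ref{lemma_dimV} (the $L^2(\Sigma_f)$--membership of $g_{b_i},\ l_{b_j}H$ and of their gradients followed from the finite $f$--volume of $\Sigma$, from $\|A\|\in L^2(\Sigma_f)$, and from the polynomially--weighted integrability of $|x|$). Thus the entire content of the lemma is carried by the $f$--parabolicity of properly immersed self--shrinkers; one could alternatively reach the same conclusion by a direct cutoff argument using those $L^2$ bounds, but invoking Theorem \ref{thm_KNR} is the cleanest route.
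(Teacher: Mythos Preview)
Your argument is correct and follows exactly the route taken in the paper: one uses $f$--parabolicity of properly immersed self--shrinkers (Remark \ref{rmk_fparss}) together with $|\nabla\phi|\in L^2(\Sigma_f)$ from Lemma \ref{lemma_dimV} to apply Theorem \ref{thm_KNR} to $X=\nabla\phi$, and then reads off the conclusion from $\Delta_f\phi=-\|A\|^2\phi$. You have simply unpacked the references to Proposition \ref{prop_eigenjacobi} and Lemma \ref{lemma_dimV} more explicitly than the paper does.
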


\begin{proof}
Note that the assumption of $\Sigma$ being properly immersed implies that it is 
$f$--parabolic. Let $\phi\in U$. Then $|\nabla \phi|\in L^2(\Sigma_f)$ 
and the conclusion follows as an application 
of Theorem \ref{thm_KNR} to the vector field $X=\nabla \phi$, 
keeping in mind Proposition \ref{prop_eigenjacobi} and Lemma \ref{lemma_dimV}.
\end{proof}

We are now ready to prove the main theorem of this paper.
\begin{proof}[Proof of Theorem \ref{thm_Index}]
Note first that if $\mathrm{Ind}_f(\Sigma)=+\infty$, then the inequality
$\mathrm{Ind}_f(\Sigma)\geq m+2$ is trivially satisfied. Hence
assume that $\mathrm{Ind}_f(\Sigma)<+\infty$.
We claim that any function $\varphi=a+\phi$, for some $a\in\mathbb{R}$, $\phi\in U$, 
satisfies
\[
\int_\Sigma \varphi L_f\varphi e^{-f}d\mathrm{vol}_\Sigma=-\int_\Sigma \varphi\Delta_f \varphi+(\|A\|^2+1)\varphi^2e^{-f}d\mathrm{vol}_\Sigma<0.
\]
In this case, Lemma \ref{lemma_dimV} would imply that either $\Sigma$ is totally geodesic 
(and hence a hyperplane through the origin) or
$\mathrm{Ind}_f(\Sigma)\geq \mathrm{dim}V=m+2$.

A straightforward computation shows that
\[
\varphi\Delta_f\varphi+(\|A\|^2+1)\varphi^2=\varphi^2 +\|A\|^2a^2+a\phi\|A\|.
\]
Applying Lemma \ref{prop_ineqsff} we get
\[
\int_\Sigma \phi\|A\|^2 e^{-f}d\mathrm{vol}_\Sigma=0.
\]
Hence
\begin{align*}
-\int_\Sigma \varphi\Delta_f \varphi+(\|A\|^2+1)\varphi^2e^{-f}d\mathrm{vol}_\Sigma&=
-\int_\Sigma \varphi^2 +\|A\|^2a^2+a\phi\|A\|e^{-f}d\mathrm{vol}_\Sigma\\
&= -\int_\Sigma \varphi^2 +\|A\|^2a^2 e^{-f}d\mathrm{vol}_\Sigma\\
&< 0.
\end{align*} 

Finally, assume that $\mathrm{Ind}_f(\Sigma)=m+2$. Note that, since $H$ belongs to $\mathcal{V}$ (see the proof of Theorem 9.36 in \cite{CoMi}) and $L_f H=-2H$, we get that $H$ does not change sign. To see this, assume by contradiction that $H$ changes sign. Then, by Theorem 9.36 in \cite{CoMi} it follows that the least negative eigenvalue $\lambda_1:=\lambda^{L_f}_1(\Sigma)$ of $L_f$ is strictly less than $-2$. Let $g$ be an $L^2(\Sigma_f)$ eigenfunction relative to  $-\lambda_1$. Since $\mathrm{Ind}_f(\Sigma)=m+2$ it follows by Proposition \ref{prop_eigenjacobi} that $g=aH+\phi$ for some $0\neq a\in\mathbb{R}$ and for some $\phi\in U$. Then $g\in \mathcal{V}$ and, as a consequence of Lemma 9.25 in \cite{CoMi}, $g$ does not change sign. Moreover,
\[
-(\lambda_1+1+\|A\|^2)g=\Delta_f g=a\Delta_f H+\Delta_f \phi=-\|A\|^2g+a H.
\]
In particular, $aH=-(\lambda_1+1)g$, contradicting the fact that $H$ changes sign. 

Since $H$ does not change sign and $\|A\|\in L^2(\Sigma_f)$, we can apply Theorem 8 in \cite{Rim} with the choices $u=\|A\|$, $v=H$, $a=\|A\|^2-1$ to conclude that $\|A\|=CH$ for some $0\neq C\in \mathbb{R}$ and that $|\nabla A|=|\nabla \|A\||$. These are the key geometric identities to prove our assertion. Indeed, reasoning as in the proof of Theorem 10.1 in \cite{CoMi}, we can proceed as follows.

Let $p\in\Sigma$, $\{e_i\}_{i=1}^{m}$ an orthonormal frame in $T_p\Sigma$ that diagonalizes $A_{p}$, then
\[
\langle Ae_{i}, e_{j}\rangle=\lambda_{i}\delta_{ij}.
\]
We hence have that
\begin{itemize}
	\item[(i)] For every $k$, there exists $\alpha_{k}$ such that $\nabla_{e_{k}}\langle Ae_{i}, e_{i}\rangle=\alpha_{k}\lambda_{i}$, $i=1\ldots,m$.
	\item[(ii)] If $i\neq j$ then $\nabla_{e_k}\langle Ae_{i}, e_{j}\rangle=0$. 
\end{itemize}
Since $\nabla A$ is fully symmetric, by Codazzi equations, (ii) implies
\begin{itemize}
	\item[($\tilde{\mathrm{ii}}$)]$\nabla_{e_{k}}\langle Ae_{i}, e_{j}\rangle=0$ unless $i=j=k$. 
\end{itemize}
If $\lambda_{i}\neq0$ and $j\neq i$, then $0=\nabla_{e_{j}}\langle Ae_{i}, e_{i}\rangle=\alpha_{j}\lambda_{i}$, so that $\alpha_{j}=0$.

In particular, if $\mathrm{rk}(A_{p})\geq 2$, then $\alpha_{j}=0$ for every $j$, and thus, by (i), $\left(\nabla A\right)_{p}=0$.

We now consider two cases, depending on the rank of $A$.

\textbf{Case 1.} \textit{There exists $p\in\Sigma$ such that $\mathrm{rk}(A_p)\geq2$}. Reasoning as in \cite{CoMi} we deduce that $\mathrm{rk}(A)$ is at least two everywhere and hence $\nabla A\equiv 0$ on $\Sigma$. According to a theorem of Lawson, \cite{Lawson}, we then obtain that $\Sigma$ is isometric to a cylinder $\mathbb{S}^k(\sqrt{k})\times\mathbb{R}^{m-k}$, for some $1\leq k\leq m$.

\textbf{Case 2.} $\mathrm{rk}(A_p)=1$. Since $|H|>0$, the remaining case to consider is when the rank of $A$ is exactly one at every point. Assume without loss of generality that $H>0$. In this case, reasoning as in Case 2 of the proof of Theorem 10.1 in \cite{CoMi}, we are able to conclude that $\Sigma$ is invariant under the isometric translations in the $(m-1)$--dimensional subspace spanned by a global orthonormal frame for $\mathrm{Ker}(A)$. Therefore $\Sigma$ is a product of a curve $\Gamma\subset\mathbb{R}^2$ and this $(m-1)$--dimensional subspace. The curve $\Gamma$ has to be a smooth complete self-shrinking curve in $\mathbb{R}^2$ with $H>0$ and finite $f$-length. In particular, as a consequence of Lemma 10.39 in \cite{CoMi}, $\Gamma$ is bounded and hence, since it is complete, it is a closed self-shrinking curve with $H>0$. Theorem A in \cite{AL} implies then that either $\Gamma$ is a unit circle or it is a member of the Abresch-Langer family $\{\Gamma_{p,n}\}$ described as follows. If $p, n$ are 
positive integers satisfying $1/2<p/n<\sqrt{2}/2$, there is, up to congruence, a unique curve $\Gamma_{p,n}$ having rotation index $p$ and with $2n$ vertices (i.e. critical points of the curvature $H$). Proposition 2.1 in \cite{EW} implies then that either $\Gamma$ is a unit circle or $n\geq 3$ and the $f$-stablity operator on $\Gamma$ has at least $4$ negative eigenvalues (counted with multiplicity). However this latter possibility cannot occur since, otherwise, we would have that $\mathrm{Ind}_f(\Sigma)=m-1+\mathrm{Ind}_f(\Gamma)>m+2$, contradicting our assumption. Hence $\Gamma$ must be a unit circle and this concludes the proof.   

\end{proof}
\begin{acknowledgement*}
The author has been supported by the ``Gruppo
Nazionale per l'Analisi Matematica, la Probabilit\`a e le loro
Applicazioni'' (GNAMPA) of the Istituto Nazionale di Alta Matematica
(INdAM). Finally, the author is deeply grateful to Stefano Pigola and Michele Rimoldi
for useful conversations during the preparation of the manuscript.
\end{acknowledgement*}

\bigskip

\bibliographystyle{amsplain}
\bibliography{bib_LowIndSS}

\providecommand{\bysame}{\leavevmode\hbox to3em{\hrulefill}\thinspace}
\providecommand{\MR}{\relax\ifhmode\unskip\space\fi MR }
\providecommand{\MRhref}[2]{%
  \href{http://www.ams.org/mathscinet-getitem?mr=#1}{#2}
}
\providecommand{\href}[2]{#2}
\begin{thebibliography}{10}

\bibitem{AL}
U.~Abresch and J.~Langer, \emph{The normalized curve shortening flow and
  homothetic solutions}, J. Differential Geom. \textbf{23} (1986), no.~2,
  175--196.

\bibitem{A}
S.~B. Angenent, \emph{Shrinking doughnuts}, Nonlinear diffusion equations and
  their equilibrium states, 3 ({G}regynog, 1989), Progr. Nonlinear Differential
  Equations Appl., vol.~7, Birkh\"auser Boston, Boston, MA, 1992, pp.~21--38.

\bibitem{ChMeZh1}
X.~Cheng, T.~Mejia, and D.~Zhou, \emph{Stability and compactness for complete
  $f$-minimal surfaces}, arXiv:1210.8076. To appear on Trans. Amer. Math. Soc.

\bibitem{ChZh}
X.~Cheng and D.~Zhou, \emph{Volume estimates about shrinkers}, Proc. Amer.
  Math. Soc. \textbf{141} (2013), no.~2, 687--696.

\bibitem{CoMi}
T.~H. Colding and W.~P. Minicozzi, \emph{Generic mean curvature flow {I};
  generic singularities}, Ann. of Math. \textbf{2} (2012), no.~175, 755--833.

\bibitem{DX}
Q.~Ding and Y.~L. Xin, \emph{Volume growth, eigenvalue and compactness for
  self-shrinkers}, Asian J. Math. \textbf{17} (2013), no.~3, 443--456.

\bibitem{DK}
G.~Drugan and S.~J. Kleene, \emph{Immersed self-shrinkers}, Preliminary version
  on arXiv:1306.2383.

\bibitem{EW}
C.~L. Epstein and M.~I. Weinstein, \emph{A stable manifold theorem for the
  curve shortening equation}, Comm. Pure Appl. Math. \textbf{40} (1987), no.~1,
  119--139.

\bibitem{Esp}
J.~M. Espinar, \emph{Manifolds with density, applications and gradient
  {S}chr\"odinger operators}, arXiv:1209.6162v6.

\bibitem{FC}
D.~Fischer-Colbrie, \emph{On complete minimal surfaces with finite {M}orse
  index in three-manifolds}, Invent. Math. \textbf{82} (1985), no.~1, 121--132.

\bibitem{Gr}
A.~A. Grigor'yan, \emph{Analytic and geometric background of recurrence and
  non-explosion of the {B}rownian motion on {R}iemannian manifolds}, Bull.
  Amer. Math. Soc. (N.S.) \textbf{36} (1999), no.~2, 135--249.

\bibitem{Huisken}
G.~Huisken, \emph{Asymptotic behavior for singularities of the mean curvature
  flow}, J. Differential Geom. \textbf{31} (1990), no.~1, 285--299.

\bibitem{Hu}
C.~Hussey, \emph{Classification and analysis of low index {M}ean {C}urvature
  {F}low self-shrinkers}, ProQuest LLC, Ann Arbor, MI, 2012, Thesis
  (Ph.D.)--The Johns Hopkins University.

\bibitem{IR}
D.~Impera and M.~Rimoldi, \emph{Stability properties and topology at infinity
  of $f$--minimal hypersurfaces}, To appear on Geom. Dedicata. doi:
  10.1007/s10711-014-9999-6.

\bibitem{Lawson}
H.~B. Lawson, \emph{Local rigidity theorems for minimal hypersurfaces}, Ann. of
  Math. (2) \textbf{89} (1969), 187--197.

\bibitem{O}
B.~O'Neill, \emph{Semi-{R}iemannian geometry. with applications to relativity},
  Pure and Applied Mathematics, vol. 103, Academic Press Inc. [Harcourt Brace
  Jovanovich Publishers], New York, 1983.

\bibitem{PRS_Progress}
S.~Pigola, M.~Rigoli, and A.~G. Setti, \emph{Vanishing and finiteness results
  in geometric analysis. a generalization of the bochner technique}, Progress
  in Mathematics, vol. 266, Birkh\"auser Verlag, Basel, 2008.

\bibitem{PiRi}
Stefano Pigola and Michele Rimoldi, \emph{Complete self-shrinkers confined into
  some regions of the space}, Ann. Global Anal. Geom. \textbf{45} (2014),
  no.~1, 47--65.

\bibitem{RS}
M.~Rigoli and A.~G. Setti, \emph{Liouville type theorems for
  $\varphi$-subharmonic functions}, Rev. Mat. Iberoamericana \textbf{17}
  (2001), no.~3, 471--520.

\bibitem{Rim}
M.~Rimoldi, \emph{On a classification theorem for self-shrinkers}, Proc. Amer.
  Math. Soc. \textbf{142} (2014), no.~10, 3605--3613.

\bibitem{RV}
M.~Rimoldi and G.~Veronelli, \emph{Topology of steady and expanding gradient
  {R}icci solitons via {$f$}-harmonic maps}, Differential Geom. Appl.
  \textbf{31} (2013), no.~5, 623--638.

\bibitem{Setti_RSMUP}
A.~G. Setti, \emph{Eigenvalue estimates for the weighted {L}aplacian on a
  {R}iemannian manifold}, Rend. Sem. Mat. Univ. Padova \textbf{100} (1998),
  27--55.

\end{thebibliography}
 
\end{document}